\numberwithin{equation}{section}
\theoremstyle{definition}
\newtheorem{definition}{Definition}[section]
\newtheorem{example}[definition]{Example}
\theoremstyle{remark}
\theoremstyle{plain}
\newtheorem{proposition}[definition]{Proposition}
\newtheorem{theorem}[definition]{Theorem}
\newtheorem{lemma}[definition]{Lemma}
\newtheorem{result}[definition]{Result}
\newcommand{\unitdisk}{\mathbb{D}}
\newcommand{\koba}{\mathsf{k}}
\newcommand{\dkoba}{\kappa}
\newcommand{\linVarRad}{r_{\zeta}}
\newcommand{\objFun}{\phi_{\zeta}}
\newcommand{\rprt}{\mathsf{Re}}
\newcommand{\iprt}{\mathsf{Im}}
\newcommand{\distance}{\mathrm{dist}}
\newcommand{\dtb}[1]{\delta_{#1}}
\newcommand{\rsup}[1]{r^{\bullet}_{#1}}
\newcommand{\ess}[1]{S^{\bullet}_{#1}}
\newcommand{\sqdiff}{\gamma}
\newcommand*{\defeq}{\mathrel{\vcenter{\baselineskip0.5ex \lineskiplimit0pt \hbox{\scriptsize.}\hbox{\scriptsize.}}}=}
\newcommand*{\defines}{=\mathrel{\vcenter{\baselineskip0.5ex \lineskiplimit0pt \hbox{\scriptsize.}\hbox{\scriptsize.}}}}
\newcommand{\cmpl}{\mathsf{c}}
\newcommand{\bdy}{\partial}
\newcommand{\OM}{\Omega}
\newcommand{\smoo}{\mathcal{C}}
\newcommand{\bcdot}{\boldsymbol{\cdot}}
\newcommand{\C}{\mathbb{C}} 
\newcommand{\R}{\mathbb{R}}
\newcommand{\posint}{\mathbb{Z}_{+}}
\begin{document}

\title[Schwarz's lemma \& the Kobayashi metric on convex domains]{A form of Schwarz's lemma and a bound for the Kobayashi metric on convex domains}

\author{Anwoy Maitra}
\address{Department of Mathematics, Indian Institute of Science, Bangalore 560012, India}
\email{anwoymaitra@iisc.ac.in}

\thanks{This work is supported by a scholarship from the Indian Institute of Science}

\keywords{Convex domains, Kobayashi metric, Schwarz's lemma}
\subjclass[2010]{Primary: 32F45, 32H02}

\begin{abstract}
{We present a form of Schwarz's lemma for holomorphic maps between convex domains $D_1$ and $D_2$. This result provides a lower bound on
the distance between the images of relatively compact subsets of $D_1$ and the boundary of $D_2$. This is a natural improvement of an old
estimate by Bernal-Gonz{\'a}lez that takes into account the geometry of $\partial{D_1}$. Using similar techniques, we also provide a new
estimate for the Kobayashi metric on bounded convex domains.} 
\end{abstract}
\maketitle

\vspace{-0.2cm}
\section{Introduction}

In this paper, we shall prove two theorems concerning the Kobayashi geometry of convex domains in $\C^n$. In this section, we introduce
these theorems and discuss some of the motivations behind them.

\smallskip 

Our first theorem is motivated by the following result of Bernal-Gonz{\'a}lez:
\begin{result}[Bernal-Gonz{\'a}lez \cite{BG}] \label{R:BGs_result}
Let $E$, $F$ be two complex Banach spaces. Assume that $D_1 \subseteq E$ and $D_2 \subseteq F$ are convex domains and that
$D_1$ is bounded. Fix two points $a \in D_1$ and $b \in D_2$ and a real number $r > 0$. Then there exists a real number 
$\sigma = \sigma(a,b,r) > 0$ such that, for every holomorphic map $\phi: D_1 \to D_2$ satisfying $\phi(a)=b$,
\begin{equation*}
\distance\big( \phi(\{ z \in D_1 \mid \distance(z,D^{\cmpl}_1) > r \}),D^{\cmpl}_2 \big) \geqslant \sigma,
\end{equation*}
where
\begin{equation} \label{E:BGs_lower_bound}
\sigma \defeq \distance(b,D^{\cmpl}_2) \exp\left( -\frac{2 \mu(a)}{\min\big(r,\distance(a,D^{\cmpl}_1)\big)} \right),
\end{equation}
and where $\mu(a) \defeq \sup(\{ \|z-a\| \mid z \in D_1 \})$.  	
\end{result}

While this result is stated in a setting that is very general, the dependence of the lower bound for
$\distance\big( \phi(\{ z \in D_1 \mid \distance(z,D^{\cmpl}_1) > r \}), D^{\cmpl}_2 \big)$ on the parameter
$r$ seems to be overly conservative, given that
\[
\exp\left( - \frac{2 \mu(a)}{ \min(r, \distance(a,D^{\cmpl}_1)) } \right)
\]
decays extremely rapidly as $r \to 0$. Consider, in contrast, the following: if $D_1 = D_2 = \unitdisk$ (in this paper,
$\unitdisk$ will denote the open unit disk centred at $0 \in \C$), $E = F = \C$ and $a, b \in \unitdisk$, in the notation
of Result~\ref{R:BGs_result}, then it follows from the Schwarz--Pick lemma that for any holomorphic map
$\phi: \unitdisk \to \unitdisk$ such that $\phi(a) = b$, and for any $s \in (0,1)$,
\begin{equation} \label{E:dep_on_r_disk}
\distance\big( \phi(s\unitdisk),\unitdisk^{\cmpl} \big) \geqslant 
4^{-1}(1-s)\,\distance(a,\unitdisk^{\cmpl})\,\distance(b,\unitdisk^{\cmpl}),
\end{equation}
where $(1-s)$ serves as the parameter $r$ of Result~\ref{R:BGs_result}. It is natural to think that in finite dimensions,
the bound in \eqref{E:BGs_lower_bound} could be replaced by a power of $r$. Interestingly, it turns out that this
power can be arbitrarily large, even in dimension $1$\,---\,as we shall see through concrete examples. All of these
form the motivation for Theorem~\ref{T:main_theorem} below.
In what follows, $\|\bcdot\|$ will denote the Euclidean norm and the expression $\distance(x, S)$ ($S$ being a
non-empty set) will be understood in terms of this norm. With these remarks, we can state our first result.

\begin{theorem} \label{T:main_theorem}
Let $D_1$ and $D_2$ be open convex subsets of $\C^n$ and $\C^m$ respectively. Assume $D_1$ is bounded. Fix $a \in D_1$ and $b \in D_2$.
Then there exist constants $\alpha \geq 1$ and $C > 0$\,---\,where $\alpha$ depends only on $D_1$ and $C$ depends only on $D_1$ and
$a$\,---\,such that for every holomorphic map $\phi$ from $D_1$ to $D_2$ with $\phi(a)=b$, and for every $r>0$,
\begin{equation} \label{E:improved_BG_lb}
\distance\big( \phi ( \{ z \in D_1 \mid \distance(z,D_1^\cmpl) \geqslant r \} ), D_2^\cmpl \big)
\geqslant C \, \distance(b,D_2^{\cmpl}) \, r^{\alpha}.
\end{equation}
Moreover, when $\bdy D_1$ is $\smoo^2$-smooth, we can take $\alpha=1$ in \eqref{E:improved_BG_lb}.
\end{theorem}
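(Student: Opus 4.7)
The plan is to combine the Kobayashi distance-decreasing property of $\phi$ with precise comparisons between the Kobayashi distance $\koba$ and the Euclidean distance to the boundary on each of $D_1, D_2$: a standard lower bound on $\koba_{D_2}$, and an upper bound on $\koba_{D_1}(a,\bcdot)$ whose growth as $r \downarrow 0$ is logarithmic in $1/r$ with a coefficient depending on $D_1$ alone.

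For the lower bound in $D_2$, the target estimate is: for $p, q \in D_2$ with $\distance(p, D_2^{\cmpl}) \geqslant \distance(q, D_2^{\cmpl})$,
\[
\koba_{D_2}(p, q) \geqslant \tfrac{1}{2} \log\bigl( \distance(p, D_2^{\cmpl})/\distance(q, D_2^{\cmpl}) \bigr).
\]
A nearest boundary point $q^{*}$ to $q$ determines a real-affine hyperplane through $q^{*}$ perpendicular to $q - q^{*}$ that supports $D_2$; this hyperplane can be written as $\{\rprt \widetilde L = 0\}$ for a $\C$-linear functional $\widetilde L$ normalised so that $\rprt \widetilde L(q) = \distance(q, D_2^{\cmpl})$ and $\rprt \widetilde L(w) \geqslant \distance(w, D_2^{\cmpl})$ on $D_2$. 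The distance-decreasing property applied to the holomorphic map $\widetilde L \colon D_2 \to \{w \in \C : \rprt w > 0\}$, together with the explicit lower bound $\tfrac{1}{2}|\log(\rprt w_1/\rprt w_2)|$ for the Kobayashi distance of the right half-plane, yields the claim.

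For the upper bound in $D_1$, the key move is to route the estimate through an \emph{incenter} $p^{*} \in D_1$, i.e.\ a point attaining $\rho^{*} \defeq \sup_{p \in D_1} \distance(p, D_1^{\cmpl})$ (which exists because $D_1$ is bounded convex). For any $z \in D_1$ with $\distance(z, D_1^{\cmpl}) \geqslant r$, convexity gives the Minkowski-sum-of-balls inequality
\[
\distance\bigl((1-t) p^{*} + t z,\, D_1^{\cmpl}\bigr) \geqslant (1-t)\rho^{*} + t r, \qquad t \in [0, 1].
\]
Integrating the elementary infinitesimal estimate $\dkoba_{D_1}(q, v) \leqslant \|v\|/\distance(q, D_1^{\cmpl})$ along the segment from $p^{*}$ to $z$, and restricting to $r \leqslant \rho^{*}/2$, produces
\[
\koba_{D_1}(p^{*}, z) \leqslant \frac{\|z - p^{*}\| \log(\rho^{*}/r)}{\rho^{*} - r} \leqslant \frac{\alpha}{2}\log(\rho^{*}/r), \qquad \alpha \defeq \frac{4\,\mathrm{diam}(D_1)}{\rho^{*}}.
\]
The complementary range $\rho^{*}/2 < r \leqslant \rho^{*}$ is handled by the same segment estimate, which in that regime yields the uniform bound $\koba_{D_1}(p^{*}, z) \leqslant 2\,\mathrm{diam}(D_1)/\rho^{*}$ because the distance to the boundary stays above $\rho^{*}/2$ along the entire segment. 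The triangle inequality then gives $\koba_{D_1}(a, z) \leqslant M + (\alpha/2)\log(\rho^{*}/r)$, with $M \defeq \koba_{D_1}(a, p^{*})$; here $\alpha \geqslant 1$ depends only on $D_1$ while $M$ depends only on $a$ and $D_1$.

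To finish, if $\distance(\phi(z), D_2^{\cmpl}) \geqslant \distance(b, D_2^{\cmpl})$ the conclusion is trivial (any $C \leqslant \mathrm{diam}(D_1)^{-\alpha}$ works, since $r \leqslant \mathrm{diam}(D_1)$). Otherwise, chaining the three ingredients, $\koba_{D_2}(b, \phi(z)) \leqslant \koba_{D_1}(a, z)$ together with the lower bound on $\koba_{D_2}$ and the upper bound on $\koba_{D_1}$, and exponentiating, produces
\[
\distance(\phi(z), D_2^{\cmpl}) \geqslant e^{-2M}\,\rho^{*\,-\alpha}\, r^{\alpha}\,\distance(b, D_2^{\cmpl}),
\]
which is \eqref{E:improved_BG_lb}. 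The main obstacle is precisely the refinement flagged in the introduction: an estimate along the segment from $a$ to $z$ produces an exponent proportional to $\mathrm{diam}(D_1)/\distance(a, D_1^{\cmpl})$ and hence dependent on $a$; routing through an incenter of $D_1$ is exactly what absorbs the $a$-dependence into the multiplicative constant $C$ and leaves a purely geometric exponent $\alpha = \alpha(D_1)$.
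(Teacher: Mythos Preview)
Your argument is correct and complete in substance, but it reaches the key upper bound on $\koba_{D_1}(a,\bcdot)$ by a genuinely different route than the paper. The paper invokes Mercer's estimate (Lemma~\ref{L:mercer_main_koba_dist_est}), which is proved by embedding affine images of the lemniscate-type domains $\Lambda_{\alpha}$ into $D_1$ and comparing Kobayashi distances. You instead integrate the elementary pointwise bound $\dkoba_{D_1}(q,v)\leqslant\|v\|/\distance(q,D_1^{\cmpl})$ along the straight segment from an incenter $p^{*}$ to $z$, using the convexity inequality $\distance((1-t)p^{*}+tz,D_1^{\cmpl})\geqslant(1-t)\rho^{*}+tr$ to make the integral explicit. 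Routing through $p^{*}$ rather than $a$ is precisely what makes your exponent $\alpha=4\,\mathrm{diam}(D_1)/\rho^{*}$ depend only on $D_1$; this is a clean and self-contained idea. What Mercer's approach buys is a potentially sharper exponent: his $\alpha$ reflects the local geometry of $\partial D_1$ and can be taken equal to $1$ when $\partial D_1$ is $\smoo^2$-smooth (as the paper notes via Lemma~\ref{L:smooth_main_koba_dist_est}), whereas your $\alpha$ is always at least $8$. For the bare statement of Theorem~\ref{T:main_theorem}, however, your argument suffices and is more elementary.

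Two small bookkeeping points. First, your choice $M=\koba_{D_1}(a,p^{*})$ does not quite cover the regime $\rho^{*}/2<r\leqslant\rho^{*}$, where you only obtain $\koba_{D_1}(a,z)\leqslant\koba_{D_1}(a,p^{*})+\alpha/2$ while $(\alpha/2)\log(\rho^{*}/r)$ can be arbitrarily small; taking $M=\koba_{D_1}(a,p^{*})+\alpha/2$ fixes this uniformly. Second, the case $D_2=\C^m$ (where $\distance(b,D_2^{\cmpl})$ is undefined or infinite) should be disposed of separately, as the paper does; it is of course trivial. Neither point affects the soundness of your approach.
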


While the dependence on $r$ of the lower bound in $\eqref{E:improved_BG_lb}$ is a power\,---\,which
improves upon the expression \eqref{E:BGs_lower_bound}\,---\,we reiterate that the exponent can \emph{in general} be
large. In Section~\ref{S:examples} we give an example in which any exponent $\alpha$ for which the bound 
\eqref{E:improved_BG_lb} holds true can be no smaller than a certain large number that is determined by the
geometry of $D_1$. Just as discussed
in \cite{BG}, we may view Theorem~\ref{T:main_theorem} as a form of Schwarz's lemma for convex domains.

\smallskip

Bernal-Gonz{\'a}lez's result relies upon a well-known estimate for the Carath{\'e}odory distance. This estimate actually holds true on
any bounded domain, whereas it is possible to provide sharper estimates on bounded convex domains. This is at the heart of our
improvement of Result~\ref{R:BGs_result}. It is more convenient to work with the Kobayashi distance. The improved estimate for the
Kobayashi distance that we shall use is due to Mercer \cite{Mercer}---see Section~\ref{S:prel_lemmas} for details. Our use of Mercer's
estimate is quite similar to its use recently in \cite{Zimmer,Bharali_Zimmer}.

\smallskip   

Before we move on to our second result, we need to introduce two pieces of notation: $D(a,r)$ will denote the open disk in $\C$ with
centre $a$ and radius $r$, and $\dkoba_D(p,\bcdot)$ will denote the Kobayashi pseudo-metric of the domain $D \subseteq \C^n$ at the point
$p \in D$.

\smallskip

It is of interest in many applications to be able to estimate $\dkoba_D(p,\bcdot)$. If nothing is assumed about $D$ beyond the fact that
it is a bounded convex open set, then the best result that seems to be available is the following one by Graham:

\begin{result}[Graham {\cite[Theorem~3]{Graham90}}, also see {\cite{Graham91}}] \label{R:Grahams_result}
Let $D \subseteq \C^n$ be a bounded convex open set. Given $p \in D$ and $\xi \in 
T_p^{(1,0)}D$,
we let $r$ denote the supremum of the radii of the disks centred at $p$, tangent to $\xi$, and included in $D$. Then
\begin{equation} \label{E:Grahams_bounds}
\frac{\|\xi\|}{2 r} \leqslant \dkoba_D(p,\xi) \leqslant \frac{\|\xi\|}{r}.
\end{equation} 	
\end{result}

We ought to clarify that the non-trivial bound in \eqref{E:Grahams_bounds} is the lower bound. The upper bound is a consequence
of the metric-decreasing property of holomorphic mappings. The upper bound in \eqref{E:Grahams_bounds} is achieved as an
equality in rare cases. For example, if we take $D = \unitdisk$ and $p$ to be any off-centre point ($p \neq 0$) then the upper
bound for $\dkoba_{\unitdisk}(p,1)$ given by Result~\ref{R:Grahams_result} is $1/(1-|p|)$, whereas the actual value of
$\dkoba_{\unitdisk}(p,1)$ is $1/(1-|p|^2)$, which is less than $1/(1-|p|)$. It is an interesting puzzle to 
find a better upper bound that can be stated (as is the case in Result~\ref{R:Grahams_result}) in terms of the positioning of
$(p,\xi)$. As the following theorem shows: an upper bound on $\dkoba_{D}(p,\xi)$ is available that is strictly smaller than that
provided by Result~\ref{R:Grahams_result} for $(p,\xi)$ that is, in a certain sense, ``generic'' (see the concluding sentence of
the following theorem). This theorem also shows that this more efficient bound is governed by one of two regimes,
\emph{both} of which do arise (see the examples in Section~\ref{S:examples}).

\begin{theorem} \label{T:improvement_Grahams_bound}
Let $D$ be a bounded convex open subset of $\C^n$. Let $p \in D$ and let $\xi \in T_p^{(1,0)}D \setminus \{0\}$. Write
\begin{align*}
D(\xi) &\defeq \big\{ z \in \C \mid p + (z/\|\xi\|)\xi \in D \big\}, \\
r^{\bullet}(p,\xi) &\defeq \sup ( \{ r > 0  \mid \exists \zeta \in D(\xi) \text{ such that } 0 \in D(\zeta,r) \subseteq D(\xi) \} ).
\end{align*}
Let
\begin{equation}
S^{\bullet}(p,\xi) \defeq \big\{ p + (z/\|\xi\|)\xi \mid z \in D(\xi), \; 0 \in \overline{D(z,r^{\bullet}(p,\xi))} \text{ and }
D(z,r^{\bullet}(p,\xi)) \subseteq D(\xi) \big\}.
\end{equation}
Also, for any $w \in (p + \C \, \xi)$, let $r(w,\xi)$ denote the supremum of the radii of the discs centred at $w$, tangent to $\xi$, and
included in $D$. 
Then
\begin{enumerate}
\item $S^{\bullet}(p,\xi)$ is a non-empty compact convex subset of $\C^n$ (indeed, of $D \cap (p+\C \, \xi)$) and there exists a unique
point $q(\xi) \in S^{\bullet}(p,\xi)$ such that
\begin{equation*}
\| q(\xi) - p \| = \distance(p,S^{\bullet}(p,\xi)).
\end{equation*}
\end{enumerate}
Write $\beta \defeq r^{\bullet}(p, \xi) - r(p, \xi)$ and $\sqdiff \defeq \|q(\xi) - p\|^2 - \beta^2$.
\begin{enumerate}[resume]
\item \label{C:mult-dim_thm_case_obj_fun_dcrsng_thrght} Suppose $(2 r(p,\xi) + \beta)\sqdiff \leqslant \beta r(p,\xi)^2$. Then
\begin{equation} \label{E:mult-dim_thm_cnclsn_case_obj_fun_dcrsng_thrght}
\dkoba_D(p,\xi)\,\leqslant\,\frac{r^{\bullet}(p,\xi)}{r^{\bullet}(p,\xi)^2 - \|q(\xi)-p\|^2} \|\xi\|. 
\end{equation}

\item \label{C:mult-dim_thm_case_obj_fun_not_dcrsng_thrght} Suppose $(2 r(p,\xi) + \beta)\sqdiff > \beta r(p,\xi)^2$. Then
\begin{equation} \label{E:mult-dim_thm_cnclsn_case_obj_fun_not_dcrsng_thrght}
\dkoba_D(p,\xi)\,\leqslant\,\frac{1}{2r(p,\xi)}\bcdot \frac{\beta^2}
{\|q(\xi)-p\|(\|q(\xi)-p\| - \sqrt{\sqdiff})} \|\xi\|. 
\end{equation}
\end{enumerate}
The upper bounds occurring above
are strictly smaller than $\|\xi\|/r(p,\xi)$ unless $q(\xi)=p$.	
\end{theorem}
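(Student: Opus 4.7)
The plan is to reduce the theorem to a one-dimensional problem by slicing $D$ with the complex line $p + \C\,\xi$, and then to apply a Schwarz--Pick-type estimate to a one-parameter family of discs in the planar slice, obtained as Minkowski convex combinations of the inscribed disc about $p$ and an optimal inscribed disc coming from $S^{\bullet}(p,\xi)$. Under the isometry $p + z\xi/\|\xi\| \leftrightarrow z$, the slice $D \cap (p + \C\,\xi)$ becomes the bounded convex planar domain $D(\xi)$ with $0 \leftrightarrow p$, and holomorphic contractivity gives $\dkoba_D(p,\xi) \leq \|\xi\|\,\dkoba_{D(\xi)}(0,1)$. For each disc $D(z_0, R) \subseteq D(\xi)$ with $|z_0| < R$, the M\"obius map $\unitdisk \to D(z_0, R)$ sending $0$ to $0$ has derivative of modulus $(R^2 - |z_0|^2)/R$, so composing with the inclusion into $D(\xi)$ yields $\dkoba_{D(\xi)}(0,1) \leq R/(R^2 - |z_0|^2)$.

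For part~(1), identify $S^{\bullet}(p,\xi)$ under the above identification with $S^{\bullet}_{\mathrm{pl}} := \{z_0 \in \C : |z_0| \leq r^{\bullet}(p,\xi) \text{ and } D(z_0, r^{\bullet}(p,\xi)) \subseteq D(\xi)\}$. Non-emptiness follows by extracting a subsequential limit of centres of discs whose radii approach $r^{\bullet}(p,\xi)$; closedness follows because any point in the open disc $D(z, r^{\bullet}(p,\xi))$ eventually lies in $D(z_n, r^{\bullet}(p,\xi))$ when $z_n \to z$. Convexity comes from writing each point of the disc centred at $(z_1 + z_2)/2$ as the midpoint of corresponding points in the two parent discs, and invoking convexity of $D(\xi)$. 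Existence and uniqueness of the closest point $q(\xi)$ to $p$ are then the standard projection property of closed convex subsets of Euclidean space.

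Set $\rho := r(p,\xi)$, $r^{\bullet} := r^{\bullet}(p,\xi)$, $d := \|q(\xi) - p\|$, and assume $d > 0$ (otherwise $r^{\bullet} = \rho$ and both stated bounds reduce to Graham's). Let $q^{\bullet}$ denote the planar representative of $q(\xi)$, so $|q^{\bullet}| = d$. For $\lambda \in [0,1]$ the Minkowski combination $(1-\lambda)D(0,\rho) + \lambda D(q^{\bullet}, r^{\bullet})$ equals the disc $D(\lambda q^{\bullet}, \rho + \lambda\beta)$; by convexity of $D(\xi)$ it lies in $D(\xi)$, and it contains $0$ because $\lambda d \leq \rho + \lambda\beta$ (equivalent to $d \leq r^{\bullet}$). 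Setting $t := \lambda d$, the Schwarz bound above gives
\begin{equation*}
\dkoba_{D(\xi)}(0,1) \;\leq\; \objFun(t) \defeq \frac{\linVarRad(t)}{\linVarRad(t)^2 - t^2}, \qquad \linVarRad(t) \defeq \rho + \frac{\beta}{d}\,t, \qquad t \in [0, d).
\end{equation*}
A direct computation shows that $\objFun'(t)$ has the opposite sign to $(\beta/d)(\linVarRad(t)^2 + t^2) - 2\linVarRad(t)\,t$, which is negative at $t=0$, and evaluating at $t=d$ yields $\objFun'(d) \leq 0 \iff \beta r^{\bullet 2} \geq (2\rho+\beta)d^2 \iff (2\rho+\beta)\sqdiff \leq \beta\rho^2$ (here $\sqdiff = d^2 - \beta^2 \geq 0$, the non-negativity coming from $|z_0| \geq \beta$ for every $z_0 \in S^{\bullet}_{\mathrm{pl}}$ by the triangle inequality applied at a closest boundary point). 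The equation $\beta(\linVarRad^2 + t^2) = 2d\,\linVarRad\,t$ has a unique positive root in $t$, so under Case~\eqref{C:mult-dim_thm_case_obj_fun_dcrsng_thrght} the function $\objFun$ is decreasing on all of $[0, d]$ and $\min\objFun = \objFun(d) = r^{\bullet}/(r^{\bullet 2} - d^2)$, yielding \eqref{E:mult-dim_thm_cnclsn_case_obj_fun_dcrsng_thrght}; under Case~\eqref{C:mult-dim_thm_case_obj_fun_not_dcrsng_thrght}, $\objFun$ attains an interior minimum at $t^* = \rho d(d - \sqrt{\sqdiff})/(\beta\sqrt{\sqdiff})$, and substituting via the critical equation gives $\objFun(t^*) = \beta^2/(2\rho\,d(d - \sqrt{\sqdiff}))$, which is \eqref{E:mult-dim_thm_cnclsn_case_obj_fun_not_dcrsng_thrght}.

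The concluding strict-inequality statement is an elementary check: in Case~\eqref{C:mult-dim_thm_case_obj_fun_dcrsng_thrght}, the hypothesis rearranges as $d^2 \leq \beta r^{\bullet 2}/(2\rho+\beta) < \beta r^{\bullet}$, so $r^{\bullet}/(r^{\bullet 2} - d^2) < 1/\rho$ whenever $d > 0$; in Case~\eqref{C:mult-dim_thm_case_obj_fun_not_dcrsng_thrght}, rationalising yields $\objFun(t^*) = (1 + \sqrt{1 - \beta^2/d^2})/(2\rho) < 1/\rho$, since the case hypothesis forces $\beta \in (0, d)$. The main obstacle is managing the calculus cleanly---identifying the exact case boundary with $\objFun'(d) = 0$ and evaluating $\objFun(t^*)$ in closed form by using the critical equation itself rather than brute substitution.
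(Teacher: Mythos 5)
Your proposal is correct and follows essentially the same route as the paper: slicing to the planar domain $D(\xi)$, establishing compactness and convexity of $S^{\bullet}$ by limiting arguments and Minkowski averaging of inscribed discs, and then minimizing the Schwarz--Pick bound $\linVarRad(t)/(\linVarRad(t)^2-t^2)$ over the linear family of discs interpolating between $D(0,r(p,\xi))$ and $D(q^{\bullet},r^{\bullet}(p,\xi))$, with the case distinction governed by the sign of the derivative at the endpoint. The only cosmetic difference is your change of variable $t=\lambda d$, absorbing the factor $|\zeta-p|$ into the parameter; the paper's Lemma~\ref{L:mnmztn_obj_fn} keeps $t\in[0,1]$ and also records explicitly that the hypothesis of case~\eqref{C:mult-dim_thm_case_obj_fun_dcrsng_thrght} forces $d<r^{\bullet}(p,\xi)$ so that $\objFun$ is actually defined at the right endpoint, a point you leave implicit.
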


The above result is also a part of the effort to provide more informative bounds for $\dkoba_{D}(p,\bcdot)$.
There are works describing the contribution of lower order terms in $1/\distance(p,D^{\cmpl})$ to asymptotic
expressions and to bounds for $\dkoba_{D}(p,\bcdot)$ when $D$ is a bounded strongly pseudoconvex domain;
see \cite{Graham75, Ma,Fu}. The description of the lower order terms in $1/\distance(p,D^{\cmpl})$ is in terms of
certain geometric invariants of the ($\smoo^2$-smooth) manifold $\bdy D$. While we merely study convex
domains $D \Subset \C^n$, we make absolutely no assumptions about the regularity of $\bdy D$, whence the latter descriptions
make no sense in general. Instead, we have the estimates of Theorem~\ref{T:improvement_Grahams_bound}. 

\smallskip

The estimates in Theorem~\ref{T:improvement_Grahams_bound} are often easy to work with (and simplify
to quite natural expressions) when a specific domain is given. Furthermore, these inequalities are sharp. Indeed, the last
sentence of Theorem~\ref{T:improvement_Grahams_bound} suggests that there are instances where the upper bound
provided by Result~\ref{R:Grahams_result} is not sharp whereas that provided by
Theorem~\ref{T:improvement_Grahams_bound} is. We provide a class of examples illustrating all
these points in Section~\ref{S:examples}. 

\smallskip    

In Section~\ref{S:prel_lemmas} we present the lemmas that will be needed to prove Theorem~\ref{T:main_theorem}.
Section~\ref{S:lemmas_planar_cvx_dmns} contains the proof of, essentially, the planar version of
Theorem~\ref{T:improvement_Grahams_bound}, from which a substantial part of Theorem~\ref{T:improvement_Grahams_bound} is derived.
Finally, Sections~\ref{S:proof_main_thm}~and~\ref{S:proof_improvement_Grahams_bound} contain, respectively, the proofs of
Theorems~\ref{T:main_theorem}~and~\ref{T:improvement_Grahams_bound}.

\medskip

\section{Examples} \label{S:examples}

We first present the example alluded to in the paragraph following Theorem~\ref{T:main_theorem}.

\begin{example}
{\em An example of a bounded convex domain $\Omega_h\Subset \C$ and a holomorphic map $\phi : \Omega_h\to \unitdisk$
where any $\alpha$ for which the bound \eqref{E:improved_BG_lb}, with $D_1 = \Omega_h$ and $D_2 = \unitdisk$, holds
true is large.}

\smallskip

\noindent{Consider, for an arbitrary (small) $h>0$ the bounded convex region
\[
\Omega_h \defeq D\big( i(1-h),1 \big) \cap D\big( -i(1-h),1 \big).
\]  
Write $C_1 \defeq \partial D\big( i(1-h),1 \big)$ and $C_2 \defeq \partial D\big( -i(1-h),1 \big)$. Then $C_1$ and $C_2$ intersect at
two points $c$ and $-c$, where
\[
c \defeq \sqrt{2h-h^2}.
\]
Observe that $\Omega_h$ has the shape of the cross-section of a lens, with vertices $\pm c \in \R$. 
Let us construct a biholomorphism $\phi$ from $\Omega_h$ to $\unitdisk$. Consider the following four functions:
\begin{align*}
f_1(z) &\defeq \frac{1}{z-c} \quad \forall \, z \in \C \setminus \{ c \}, \\
f_2(z) &\defeq -\left( z + \frac{1}{2c} \right) \quad \forall \, z \in \C, \\
f_3(z) &\defeq z^{\beta} \quad \forall \, z \text{ such that } \rprt \, z > 0, \\
f_4(z) &\defeq \frac{z-1}{z+1} \quad \forall \, z \in \C \setminus \{ -1 \},
\end{align*}
where $f_3$ is the holomorphic branch of the $\beta$-th power that maps $\R_{+}$ onto $\R_{+}$, 
\[
\beta \defeq \frac{\pi}{2A} \quad \text{and} \quad 
A \defeq \arctan \left( \frac{\sqrt{2h-h^2}}{1-h} \right).
\]
Here, $2A$ is the magnitude of the smaller angle that the circles $C_1$ and $C_2$ make with one another at both $c$ and $-c$, and also
(by conformality) the acute angle between the lines $L_1$ and $L_2$, which the circles $C_1$ and $C_2$ get mapped to, respectively,
under $f_1$,  at their point of intersection, $-\frac{1}{2 c}$. The composition
$\phi \defeq f_4 \circ f_3 \circ f_2 \circ f_1$ makes sense on $\Omega_h$,
and it follows from standard facts about M{\"o}bius transformations that it is a biholomorphism from $\Omega_h$ to $\unitdisk$. The
explicit expression for $\phi$ is given by
\[
\phi(z) = \frac{ \left( \frac{z+c}{2c(c-z)} \right)^{\beta} - 1 }{ \left( \frac{z+c}{2c(c-z)} \right)^{\beta} + 1 }.
\]

\smallskip

Now, for $t > 0$ small, consider the point $c-t$ of $\Omega_h$. A simple geometric argument shows that the distance $d(t)$ of this point
from $\Omega_h^{\cmpl}$ is
\[
d(t) = t \left( \frac{2c-t}{1 + \sqrt{t^2-2ct+1}} \right).
\]
Therefore $d(t) \approx t$ as $t \to 0$. Consider the set
\[
\Omega_{h,t} \defeq \{ z \in \Omega_h \mid \distance(z,\Omega_h^{\cmpl}) \geqslant d(t) \}.
\]
Note that $c-t$ is a boundary point of $\Omega_{h,t}$. Therefore $\phi(c-t) \in \phi \big( \Omega_{h,t} \big)$; it, too, is
a boundary point. Also,
\[
\phi(c-t) = \frac{ \left( \frac{2c-t}{2ct} \right)^{\beta} - 1 }{ \left( \frac{2c-t}{2ct} \right)^{\beta} + 1 }.
\]
The distance of $\phi(c-t)$ from $\unitdisk^{\cmpl}$ is
\[
1-\phi(c-t) = \frac{ 2 (2ct)^{\beta} }{ (2c-t)^{\beta} + (2ct)^{\beta} }.
\]
Therefore, denoting by $d'(t)$ the distance of $\phi(c-t)$ from $\unitdisk^{\cmpl}$, we see that
$d'(t) \approx t^{\beta}$ as $t \to 0$.

\smallskip 

Now, $\distance\big(\phi(\Omega_{h,t}),\unitdisk^{\cmpl} \big) \leqslant d'(t)$ and so
$\distance\big(\phi(\Omega_{h,t}),\unitdisk^{\cmpl} \big) = O(t^{\beta})$ as $t \to 0$. Let us
write
\[
  I \defeq \inf\{\alpha > 0 \mid \exists C(\alpha)>0 \text{ such that }
  		\distance\big(\phi(\Omega_{h,t}),\unitdisk^{\cmpl} \big) \geqslant C(\alpha)d(t)^\alpha
  		\text{ as $t \to 0$}\}.
\] 
Recall that $d(t) \approx t$ as $t \to 0$. Clearly, then, $I\nless \beta$. I.e., no exponent smaller than $\beta$
would suffice for the bound \eqref{E:improved_BG_lb} in this example. Finally, observe that $\beta$ can
be made arbitrarily large by taking $h$ to be sufficiently small.
\hfill $\blacktriangleleft$}
\end{example}

\begin{example}\label{Ex:balls}
{\em A family of examples for which the bound provided by Theorem~\ref{T:improvement_Grahams_bound} is sharp while the
upper bound provided by Result~\ref{R:Grahams_result} is \emph{strictly} greater.}

\smallskip

\noindent{Consider $D = \mathbb{B}^n$, the unit Euclidean ball with centre $0 \in \C^n$. Let us consider a point $p \in \mathbb{B}^n
\setminus \{0\}$. By the fact that unitary transformations are holomorphic automorphisms of $\mathbb{B}^n$, it suffices to consider
$(p,\xi)$ of the form $\big( x,(\|\xi\|,0,\ldots,0) \big)$, $x\in \mathbb{B}^n$. Let us write
$v \defeq (\|\xi\|,0,\ldots,0)$. We leave it to the reader to verify that
\begin{align*}
D(v) = D\big(-x_1,\sqrt{1-\|x'\|^2} \big),&\; \quad r^{\bullet}(x,v) = \sqrt{1-\|x'\|^2}, \\
S^{\bullet}(x,v) = \{(0,x')\},&\; \quad q(v) = (0,x'),
\end{align*}
where we write $x=(x_1,x')$. It is easy to see that the expression $(2r(x,v)+\beta)\gamma$ reduces to $0$, whence, by
item~\eqref{C:mult-dim_thm_case_obj_fun_dcrsng_thrght} of Theorem~\ref{T:improvement_Grahams_bound}, we get
the bound (note that $\|\xi\| = \|v\|$)
\begin{equation}\label{E:ball_kob_metric_estimate}
\dkoba_{\mathbb{B}^n}(x,v) \leqslant \frac{\sqrt{1-\|x'\|^2}}{1-\|x\|^2}\|\xi\|.
\end{equation}
The exact expression for $\dkoba_{\mathbb{B}^n}(p,\xi)$\,---\,see \cite[Section~3.5]{JarPfl}, for instance\,---\,is as follows:
\[
\dkoba_{\mathbb{B}^n}(p,\xi) = 
\dkoba_{\mathbb{B}^n}(x,v) = \left( \frac{\|v\|^2}{1-\|x\|^2} + \frac{|\langle v, x \rangle|^2}{(1-\|x\|^2)^2} \right)^{1/2}
= \frac{\big(1 - (\|x\|^2 - |x_1|^2)\big)^{1/2}}{1 - \|x\|^2}\|\xi\|.
\]
We see that whenever $p\neq 0$ and $\xi\in \C{p}$, the unitary transformation that maps
$\xi$ to $(\|\xi\|,0,\ldots,0)$ (and $p$ to $x$) gives $x = (c\|p\|, 0,\dots, 0)$, where $c$ is some complex
number with $|c| = 1$. Since, for any such $(p, \xi)$, $x' = 0$, it follows from \eqref{E:ball_kob_metric_estimate} that
the bound for $\dkoba_{\mathbb{B}^n}(p,\xi)$ provided by Theorem~\ref{T:improvement_Grahams_bound} is exactly
equal to $\dkoba_{\mathbb{B}^n}(p,\xi)$, whereas the upper bound
provided by Result~\ref{R:Grahams_result} is $\|\xi\|/(1-\|p\|)$, which is strictly greater. 
\hfill $\blacktriangleleft$}	
\end{example}

\begin{example}
{\em An example showing that the condition appearing in Item~\ref{C:mult-dim_thm_case_obj_fun_not_dcrsng_thrght} of 
Theorem~\ref{T:improvement_Grahams_bound} holds in simple situations.}

\noindent{Before presenting this example, we observe that the condition appearing in
Item~\ref{C:mult-dim_thm_case_obj_fun_dcrsng_thrght} of Theorem~\ref{T:improvement_Grahams_bound}
holds for the family of examples discussed in Example~\ref{Ex:balls}.
Now, consider the domain 
\begin{align*}
  D \defeq &\Big\{z \in \C \mid (|z|<1) \text{ or } \Big( 2^{-1} < \rprt(z) < 2 \text{ and } |\iprt(z)| <
  \frac{2}{\sqrt{3}}\big(
  1- 2^{-1}\rprt(z) \big)\,\Big) \Big\}.
\end{align*}
This is the unit disk together with all those $z \in \C$ such that $\rprt(z)>1/2$ and such that $z$ lies in the
angle formed by the tangent lines to the unit circle at the points $(1/2,\sqrt{3}/2)$ and
$(1/2,-\sqrt{3}/2)$. Consider the open subset $U$ of $D$ given by
\[
\left\{ z=x+iy \in \C \mid \sqrt{3}/2 < x < 1, \, |y| < 5/7\sqrt{3}\,\right\} \cap \unitdisk.
\]
We will show that, for every $z \in U$, the condition in Item~\ref{C:mult-dim_thm_case_obj_fun_not_dcrsng_thrght} of
Theorem~\ref{T:improvement_Grahams_bound} holds for the pair $(z,1)$ (and therefore, since we are in one dimension,
for any pair
$(z,\xi)$, where $\xi \in \C \setminus \{0\}$). By the symmetry of $D$ about the real axis, it suffices to
show that the condition holds for every $z \in U$ with $y \geqslant 0$. For such a $z$, $r(z,1)$ equals
the distance from $z$ to the tangent to the unit circle at $(1/2,\sqrt{3}/2)$,
which is $(2-x-\sqrt{3}y)/2$. Also, for such a $z$, 
$r^{\bullet}(z,1)=1$ and $S^{\bullet}(z,1)=\{0\}$. So, necessarily, $q(1)=0$. The quantity of our interest is
\[
(2 r(z,1) + \beta)\gamma - \beta r(z,1)^2,
\] 
which, after substituting the expressions for $\beta$ and $\gamma$, is:
\[
|q(1)-z|^2 \big( r^{\bullet}(z,1) + r(z,1) \big) - r^{\bullet}(z,1)^2 \big( r^{\bullet}(z,1) - r(z,1) \big).
\]
Substituting the actual values, we get:
\begin{align*} 
(x^2+y^2) \left( 1 + \frac{2-x-\sqrt{3}y}{2} \right) &- \left( 1 - \frac{2-x-\sqrt{3}y}{2} \right) \\
&> \frac{(3/4)(4-x-\sqrt{3}y)-x-\sqrt{3}y}{2} &&[\text{since } x > \sqrt{3}/2] \\
&=\frac{12-7x-7\sqrt{3}y}{8} > \frac{5-7\sqrt{3}y}{8} &&[\text{since } x < 1] \\
&\qquad\qquad\qquad\qquad \ \;>0 \quad &&[\text{since } y < \tfrac{5}{7\sqrt{3}}].  
\end{align*}
This shows that for every $z \in U$, the condition appearing
in Item~\ref{C:mult-dim_thm_case_obj_fun_not_dcrsng_thrght}
of Theorem~\ref{T:improvement_Grahams_bound} holds for the pair $(z,1)$.
\hfill $\blacktriangleleft$}
\end{example}
 
\medskip

\section{Preliminary lemmas} \label{S:prel_lemmas}
In order to prove Theorem~\ref{T:main_theorem}, one needs to efficiently estimate the Kobayashi distance on $D_1$. One of the most basic
estimates, which holds true on any bounded domain $\Omega$, is that, given a compact \emph{convex} subset $K$ of $\Omega$, the Kobayashi
distance $\koba_{\Omega}$ has the following upper bound:
\begin{equation}
\koba_{\Omega}(z,w) \leqslant \frac{1}{\distance(K,\Omega^{\cmpl})} \|z-w\| \quad \forall \, z,w \in K. \label{E:Kob_dist_est_cvx_cpt_set}
\end{equation}
This is essentially the estimate that is used by Bernal-Gonz{\'a}lez (he uses the Carath{\'e}odory distance, for which an analogue of
\eqref{E:Kob_dist_est_cvx_cpt_set} holds). We need a more efficient upper bound. By the nature of these estimates, this is a challenge
only close to $\bdy D_1$. Now \eqref{E:Kob_dist_est_cvx_cpt_set} arises from a comparison between the Kobayashi \emph{metrics} of
$\Omega$ and of an appropriate Euclidean ball embedded into $\Omega$. This comparison yields the following inequality:
\begin{equation}\label{E:Kob_met_est_cvx_pt}
\kappa_{\Omega}(p, \xi) \leqslant \frac{\|\xi\|}{\distance(p, \Omega^{\cmpl})}, \quad p\in \Omega, \ \xi\in T^{(1,0)}_p(\Omega).
\end{equation}
The above suggests that a more efficient estimate for $\koba_{D_1}$ could, in principle, be obtained by a comparison between the
Kobayashi metrics of $D_1$ and of (the embedded image of) some class of planar regions that are better adapted to the shape of $\bdy
D_1$. This leads us to appeal to an idea described and used by Mercer \cite{Mercer}. We consider the class of regions in $\C$ defined as
follows: for every $\alpha>1$, let $\Lambda_{\alpha}$ denote the image of $\unitdisk$ under the holomorphic mapping
\[
f \defeq z \mapsto (z+1)^{\frac{1}{\alpha}} : \{ w \in \C \mid \rprt \, w > -1 \} \to \C.
\]
For $\alpha=2$, $\Lambda_{\alpha}$ is the interior of one loop of the lemniscate. The following two results are proved in \cite[pp.
203--204]{Mercer}:

\begin{lemma}[Mercer, {\cite[Lemma~2.1]{Mercer}}] \label{L:mercer_planar_lemniscate}
Let $z_0 \in \Lambda_{\alpha}$. Then there exists a $C > 0$ such that, for all $z \in \Lambda_{\alpha}$,
\[
\koba_{\Lambda_{\alpha}}(z_0,z) \leqslant C + \frac{\alpha}{2} \log \left( \frac{1}{\distance(z,\Lambda_{\alpha}^{\cmpl})} \right). 
\]
\end{lemma}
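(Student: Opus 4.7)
My plan is to exploit the explicit biholomorphism $f : \unitdisk \to \Lambda_\alpha$ given by $f(w) = (w+1)^{1/\alpha}$ (well-defined and injective, since $w+1$ lies in the open right half-plane for $w\in\unitdisk$, where the principal $\alpha$-th root is injective). By biholomorphic invariance of the Kobayashi distance,
\[
\koba_{\Lambda_\alpha}(z_0, z) = \koba_\unitdisk(f^{-1}(z_0), f^{-1}(z)).
\]
I pivot through $0\in\unitdisk$ via the triangle inequality, combined with the explicit identity $\koba_\unitdisk(0, w) = \tfrac{1}{2}\log\tfrac{1+|w|}{1-|w|}$, to obtain (writing $w \defeq f^{-1}(z)$)
\[
\koba_{\Lambda_\alpha}(z_0, z) \leqslant C_0 + \tfrac{1}{2}\log\tfrac{1}{1-|w|},
\]
where $C_0 = \koba_\unitdisk(f^{-1}(z_0), 0) + \tfrac{1}{2}\log 2$ depends only on $z_0$ and $\alpha$.

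The heart of the argument is then the claim that there exists $c = c(\alpha) > 0$ such that
\[
1 - |f^{-1}(z)| \geqslant c\,\distance(z, \Lambda_\alpha^\cmpl)^\alpha \quad \text{for all } z\in\Lambda_\alpha.
\]
Granted this, substituting into the preceding display and simplifying logarithms yields exactly the stated conclusion with coefficient $\alpha/2$.

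To prove this key comparison, for $w = f^{-1}(z)\neq 0$ I would exhibit the explicit boundary point
\[
z^{\ast} \defeq f\bigl(w/|w|\bigr) \in \bdy\Lambda_\alpha,
\]
so that $\distance(z,\Lambda_\alpha^\cmpl) \leqslant |z - z^{\ast}| = \bigl|(1+w)^{1/\alpha} - (1+w/|w|)^{1/\alpha}\bigr|$. Both arguments lie in the closed right half-plane and differ by $1 - |w|$ in modulus. The key estimate would then follow from the Hölder-type inequality
\[
\bigl|a^{1/\alpha} - b^{1/\alpha}\bigr| \leqslant C_\alpha\,|a-b|^{1/\alpha} \quad \text{for all } a,b\in\{\zeta \in \C \mid \rprt\zeta\geqslant 0\}
\]
(principal branch), applied with $a = 1+w$, $b = 1 + w/|w|$. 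The case $w = 0$ (i.e., $z = f(0) = 1$) can be absorbed by enlarging the constant, since then both sides of the key estimate are bounded away from $0$ and $\infty$.

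The main obstacle is verifying this Hölder estimate for the principal $\alpha$-th root on the closed right half-plane. My plan is to split into two regimes. If $\min(|a|,|b|) \leqslant 2|a-b|$, then also $\max(|a|,|b|) \leqslant 3|a-b|$, so the identity $|\zeta^{1/\alpha}| = |\zeta|^{1/\alpha}$ combined with the triangle inequality yields $|a^{1/\alpha} - b^{1/\alpha}| \leqslant (2^{1/\alpha} + 3^{1/\alpha})|a-b|^{1/\alpha}$. Otherwise $|a|,|b| > 2|a-b|$, and the reverse triangle inequality shows that the segment $[a,b]$ (contained in the right half-plane by convexity) stays at distance at least $|a-b|$ from $0$; bounding $|\tfrac{1}{\alpha}\zeta^{1/\alpha-1}| \leqslant \tfrac{1}{\alpha}|a-b|^{1/\alpha-1}$ along this segment and integrating gives $|a^{1/\alpha} - b^{1/\alpha}| \leqslant \tfrac{1}{\alpha}|a-b|^{1/\alpha}$, completing the proof.
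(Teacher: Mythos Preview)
The paper does not actually prove this lemma: Section~\ref{S:prel_lemmas} explicitly states that it ``present[s] (there are no proofs in this section) the lemmas that will be needed,'' and Lemma~\ref{L:mercer_planar_lemniscate} is simply quoted from Mercer \cite{Mercer}. So there is no in-paper proof to compare against.

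That said, your argument is correct and is essentially the natural one (and, in outline, Mercer's). A few minor remarks:
\begin{itemize}
\item The phrase ``differ by $1-|w|$ in modulus'' is ambiguous; what you use (and what is true) is $|a-b| = |w - w/|w|| = 1-|w|$, not $\bigl||a|-|b|\bigr| = 1-|w|$.
\item For the integration in the second regime of your H\"older estimate you implicitly use that the segment $[a,b]$ lies in the domain of holomorphy of the principal branch $\zeta\mapsto\zeta^{1/\alpha}$. This is fine: the segment sits in the closed right half-plane (convexity) and avoids $0$ (your distance bound), hence lies in $\C\setminus(-\infty,0]$.
\item That $z^\ast=f(w/|w|)\in\bdy\Lambda_\alpha$ deserves one line: $f$ extends continuously to $\overline{\unitdisk}$, and injectivity of $\zeta\mapsto\zeta^{1/\alpha}$ on the closed right half-plane forces $f(\bdy\unitdisk)\cap\Lambda_\alpha=\varnothing$.
\end{itemize}
With these clarifications your proof is complete.
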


The above lemma; a result on how the domains $\Lambda_{\alpha}$ relate to a given convex, planar domain; and a comparison between the
Kobayashi distances of $\Omega$ (as below) and of a suitable affine embedding of $\Lambda_{\alpha}$ into $\Omega$ yield the result that
we need:
\begin{lemma}[Mercer, {\cite[Proposition~2.3]{Mercer}}] \label{L:mercer_main_koba_dist_est} 
Let $\Omega \subseteq \C^n$ be a bounded convex domain, and let $z_0 \in \Omega$. Then there are constants $\alpha >1$ and $C(z_0) > 0$
such that, for every $z \in \Omega$,
\[
\koba_{\Omega}(z_0,z) \leqslant C(z_0) + \frac{\alpha}{2} \log \left( \frac{1}{\distance(z,\Omega^{\cmpl})} \right).
\]
\end{lemma}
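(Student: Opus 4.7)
The proof applies Lemma~\ref{L:mercer_planar_lemniscate} via the metric-decreasing property of $\koba$ under holomorphic maps. The goal is to construct, for each $z \in \Omega$, an affine embedding $h\colon \C \to \C^n$ with $h(\Lambda_\alpha) \subseteq \Omega$ and $h(\zeta_0) = z_0$, $h(\zeta_1) = z$ for some $\zeta_0,\zeta_1 \in \Lambda_\alpha$ arranged so that $\distance(\zeta_1,\Lambda_\alpha^\cmpl) \gtrsim \distance(z,\Omega^\cmpl)$; one then has
\[
\koba_\Omega(z_0,z) \,\leq\, \koba_{\Lambda_\alpha}(\zeta_0,\zeta_1) \,\leq\, C + \frac{\alpha}{2}\log\frac{1}{\distance(\zeta_1,\Lambda_\alpha^\cmpl)},
\]
and the required estimate follows. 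The construction first reduces to a planar problem: for $z \neq z_0$, let $L$ be the complex affine line through $z_0$ and $z$ and put $\Delta := \Omega \cap L$. Monotonicity of $\koba$ under inclusion yields $\koba_\Omega(z_0,z) \leq \koba_\Delta(z_0,z)$, while $L \setminus \Delta \subseteq \Omega^\cmpl$ gives $\distance_L(z,\Delta^\cmpl) \geq \distance(z,\Omega^\cmpl)$, where $\distance_L$ is Euclidean distance within $L$. Identifying $L$ with $\C$ by an affine isometry, $\Delta$ is a bounded convex planar region with $D(z_0,r_0) \subseteq \Delta \subseteq D(z_0,R)$, where $r_0 := \distance(z_0,\Omega^\cmpl)$ and $R := \operatorname{diam}(\Omega)$. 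Thus it suffices to prove the analogue of the lemma for any such planar $\Delta$, with constants depending only on $r_0$ and $R$.

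\textbf{Planar construction.} For $z \in \Delta$ with $\distance(z,\Delta^\cmpl) \geq r_0/2$, the segment from $z_0$ to $z$ stays at distance at least $r_0/2$ from $\partial\Delta$, and \eqref{E:Kob_dist_est_cvx_cpt_set} furnishes a uniform upper bound on $\koba_\Delta(z_0,z)$. For $z$ near $\partial\Delta$, pick $w \in \partial\Delta$ with $|z-w| = \distance(z,\Delta^\cmpl)$ and set $u := (z-w)/|z-w|$; by convexity $u$ is the inward normal to a supporting line of $\Delta$ at $w$. Since $D(z_0,r_0) \subseteq \Delta$ lies strictly inside the supporting half-plane, the angle between $z_0 - w$ and $u$ is at most $\arccos(r_0/R)$, strictly less than $\pi/2$. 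Define $h(\zeta) := w + \mu u \zeta$ for a scaling $\mu > 0$ to be determined; this places the cusp $0 \in \partial\Lambda_\alpha$ at $w$ and aligns the positive real axis of $\Lambda_\alpha$ with the inward normal. One then chooses $\alpha > 1$ and $\mu > 0$, depending only on $r_0$ and $R$, so that $h(\Lambda_\alpha) \subseteq \Delta$ and $z_0 \in h(\Lambda_\alpha)$. Any $\alpha \in (1,\pi/(2\arccos(r_0/R)))$ makes the cusp-angle $\pi/\alpha$ of $\Lambda_\alpha$ wide enough to contain the direction of $z_0 - w$; a suitable bounded $\mu$ then realizes both containments by convexity of $\Delta$ together with the fact that $\Delta$ contains a neighborhood of the segment $[w,z_0]$.

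\textbf{Conclusion and main obstacle.} With the embedding in hand, $\zeta_1 = h^{-1}(z) = |z-w|/\mu$ is real and positive, close to the cusp $0$ where $\Lambda_\alpha$ is locally a sector of aperture $\pi/\alpha$; hence $\distance(\zeta_1,\Lambda_\alpha^\cmpl) \asymp \zeta_1 \asymp \distance(z,\Delta^\cmpl)$. Applying Lemma~\ref{L:mercer_planar_lemniscate} with base point $h^{-1}(z_0)$ then yields the planar estimate, which via the reduction above gives the theorem. The chief obstacle is the uniform choice of $\alpha$ and $\mu$ in the planar construction: as $z$ (hence $w$) varies, the local shape of $\Delta$ near $w$ changes, yet a single $\alpha$ and $\mu$ (depending only on $r_0$ and $R$) must make the embedding work throughout. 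The angular bound $\arccos(r_0/R) < \pi/2$ settles the sector condition cleanly, but verifying $h(\Lambda_\alpha) \subseteq \Delta$ uniformly requires a careful geometric exploitation of the convexity of $\Delta$ and the sandwich inclusion $D(z_0,r_0) \subseteq \Delta \subseteq D(z_0,R)$; this is the technical heart of the argument.
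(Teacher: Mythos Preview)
The paper does not supply its own proof of this lemma; it is quoted from Mercer \cite[Proposition~2.3]{Mercer}, with only the sentence preceding the statement indicating the method (Lemma~\ref{L:mercer_planar_lemniscate} together with an affine embedding of $\Lambda_\alpha$ into a planar slice of $\Omega$). Your sketch is exactly that approach and is sound; you also correctly isolate the technical heart, namely the uniform choice of $\alpha$ and $\mu$ guaranteeing $h(\Lambda_\alpha)\subseteq\Delta$ independently of $w$. One point you should make explicit: since $h^{-1}(z_0)$ moves as $w$ varies, you must also check that it remains in a fixed compact subset of $\Lambda_\alpha$, so that the constant $C$ in Lemma~\ref{L:mercer_planar_lemniscate} can be taken uniformly in $z$; this follows from the bounds $r_0\leq|z_0-w|\leq R$ together with the angular control you already have, but it deserves a sentence.
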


The bound in the above lemma can be tighter if $\Omega$, in addition to the properties stated in
Lemma~\ref{L:mercer_main_koba_dist_est}, has $\smoo^2$-smooth boundary. In that case, one can carry out the
procedure hinted at prior to Lemma~\ref{L:mercer_main_koba_dist_est} with $\Lambda_{\alpha}$ replaced by the
unit disk $\unitdisk$. This argument is very classical and widely known. Its first step is the analogue of
Lemma~\ref{L:mercer_planar_lemniscate} for $\unitdisk$, which is just a direct calculation: fixing a $z_0\in \unitdisk$,
there exists a $C > 0$ such that
\[
\koba_{\unitdisk}(z_0, z) \leqslant C + \frac{1}{2} \log \left( \frac{1}{\distance(z,\unitdisk^{\cmpl})} \right)
\]
for all $z\in \unitdisk$. This leads to the classical result:

\begin{lemma}\label{L:smooth_main_koba_dist_est}
Let $\Omega \subseteq \C^n$ be a bounded convex domain whose boundary is $\smoo^2$-smooth. Let $z_0 \in \Omega$. Then there is a constant
$C(z_0) > 0$ such that, for every $z \in \Omega$,
\[
\koba_{\Omega}(z_0,z) \leqslant C(z_0) + \frac{1}{2} \log \left( \frac{1}{\distance(z,\Omega^{\cmpl})} \right).
\]
\end{lemma}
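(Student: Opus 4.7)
I will replicate Mercer's argument leading to Lemma~\ref{L:mercer_main_koba_dist_est}, using the unit disk $\unitdisk$ in place of $\Lambda_\alpha$; the excerpt already supplies the analogue of Lemma~\ref{L:mercer_planar_lemniscate} for $\unitdisk$. The key additional ingredient, made available by $\smoo^2$-smoothness, is the uniform interior ball condition: there exists $r_0 > 0$ such that at every $\zeta \in \bdy\Omega$ the inward unit normal $n_\zeta \in \C^n$ is well defined and the Euclidean ball $B(\zeta + r_0\,n_\zeta,\, r_0)$ lies in $\Omega$. This follows from the compactness of $\bdy\Omega$, the boundedness of the principal curvatures of a $\smoo^2$ compact hypersurface, and convexity. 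I expect this step, though classical, to be the main technical obstacle; everything that follows is a triangle inequality and a textbook computation.

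Given $z \in \Omega$ with $d \defeq \distance(z,\Omega^\cmpl) < r_0$, let $\zeta \in \bdy\Omega$ be a nearest boundary point, so that $z = \zeta + d\,n_\zeta$. The complex line $\zeta + \C\,n_\zeta$ intersects $B(\zeta + r_0\,n_\zeta,\, r_0)$ in an affinely embedded complex disk $\Delta_\zeta$ of radius $r_0$ centred at $c_\zeta \defeq \zeta + r_0\,n_\zeta$, and $z \in \Delta_\zeta \subseteq \Omega$. The inclusion $\Delta_\zeta \hookrightarrow \Omega$ is holomorphic, so by the metric-decreasing property together with the explicit Poincar\'e distance in a disk,
\[
\koba_\Omega(c_\zeta,\,z) \,\leqslant\, \koba_{\Delta_\zeta}(c_\zeta,\,z) \,=\, \frac{1}{2}\log\frac{2r_0 - d}{d} \,\leqslant\, \frac{1}{2}\log(2r_0) + \frac{1}{2}\log\frac{1}{d}.
\]

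It remains to control $\koba_\Omega(z_0,c_\zeta)$ uniformly in $\zeta$. By $\smoo^2$-smoothness, $\zeta \mapsto c_\zeta$ is continuous on $\bdy\Omega$, so $K \defeq \{c_\zeta : \zeta \in \bdy\Omega\}$ is compact and $\distance(K,\Omega^\cmpl) \geqslant r_0$; its closed convex hull together with $\{z_0\}$ is therefore a compact convex subset of $\Omega$, and \eqref{E:Kob_dist_est_cvx_cpt_set} yields a finite constant $M(z_0) \defeq \sup_{\zeta \in \bdy\Omega} \koba_\Omega(z_0,c_\zeta)$. The triangle inequality then gives the claim for $d < r_0$. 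For $d \geqslant r_0$ the set $\{w \in \Omega : \distance(w,\Omega^\cmpl) \geqslant r_0\}$ is compact and convex (the Minkowski erosion of a convex set is convex), so \eqref{E:Kob_dist_est_cvx_cpt_set} bounds $\koba_\Omega(z_0,z)$ by a constant on this region, whereas $\tfrac{1}{2}\log(1/d)$ is bounded below there; absorbing all constants into $C(z_0)$ yields the lemma.
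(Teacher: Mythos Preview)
Your proof is correct and follows essentially the same classical route that the paper only sketches: replace Mercer's $\Lambda_\alpha$ by $\unitdisk$, use the uniform interior ball condition afforded by $\smoo^2$-smoothness to affinely embed a disk through $z$ tangent to $\bdy\Omega$, and combine the explicit Poincar\'e estimate with a triangle inequality and a compactness argument for the centres. The only cosmetic point is that in the case $d \geqslant r_0$ you should include $z_0$ in (the convex hull with) the eroded set before invoking \eqref{E:Kob_dist_est_cvx_cpt_set}, since $\distance(z_0,\Omega^{\cmpl})$ may be less than $r_0$.
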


Finally, we have the following lemma that gives a useful estimate on $\koba_{\Omega}$ \emph{from below}:
\begin{lemma} \label{L:kob_dist_lb_conv}
Let $\OM \varsubsetneq \C^n$ be a convex domain. Then
\[
  \koba_{\OM}(z,w) \geqslant \frac{1}{2} \log\left( \frac{\distance(w,\OM^{\cmpl})}{\distance(z,\OM^{\cmpl})} \right)
  \quad \forall \, z,w \in \OM.
\]
\end{lemma}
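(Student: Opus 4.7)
My plan is to invoke the distance-decreasing property of the Kobayashi distance under holomorphic maps, reducing the bound to an explicit computation in the right half-plane $RH \defeq \{w \in \C \mid \rprt w > 0\}$. Without loss of generality I may assume $\distance(w,\OM^{\cmpl}) \geq \distance(z,\OM^{\cmpl})$; otherwise the right-hand side is non-positive and the inequality is trivial. I will write $d(p) \defeq \distance(p,\OM^{\cmpl})$ for brevity.

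The central step will be to construct, for the fixed point $z$, a $\C$-linear functional $L: \C^n \to \C$ that maps $\OM$ holomorphically into $RH$, sends $z$ to the positive real number $d(z)$, and sends every $y \in \OM$ to a point whose real part is at least $d(y)$. To do this I would pick any nearest point $q \in \bdy\OM$ of $z$ (which exists since $\OM^{\cmpl}$ is closed and non-empty), set $\nu \defeq (z-q)/d(z)$ viewed as a unit vector of $\R^{2n} \cong \C^n$, and define $L(y) \defeq \sum_{j=1}^{n}(y_j - q_j)\overline{\nu_j}$. A short calculation (using that the ``coordinates'' of $\nu$ are real when we identify $\C^n$ with $\R^{2n}$ in the standard way) gives $\rprt L(y) = \langle y - q,\,\nu \rangle_{\R^{2n}}$, so that $\rprt L(y)$ is the signed Euclidean distance from $y$ to the real affine hyperplane $\bdy H$, where $H \defeq \{y \mid \rprt L(y) > 0\}$.

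The key geometric claim is $\OM \subseteq H$. This is where convexity is essential: by Hahn--Banach there exists a supporting real hyperplane for $\OM$ at $q$, and because $q$ is nearest to $z$, comparing the distance from $z$ to this supporting hyperplane with $d(z)$ and invoking Cauchy--Schwarz forces the outward unit normal of that hyperplane to equal $-\nu$. Hence $\OM \subseteq H$, so $L: \OM \to RH$ is holomorphic. Direct computation gives $L(z) = d(z)$, while $\rprt L(w) = \distance(w,\bdy H) \geq d(w)$, the inequality coming from $\OM \subseteq H$ (so $H^{\cmpl} \subseteq \OM^{\cmpl}$).

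The proof concludes by applying the distance-decreasing property of the Kobayashi distance to $L$, yielding $\koba_{\OM}(z,w) \geq \koba_{RH}(L(z),\,L(w))$. On $RH$ the Kobayashi distance is the Poincar\'{e} distance, and using the Cayley transform $w \mapsto (w-1)/(w+1)$ I would check the estimate $\koba_{RH}(a,\,b+it) \geq \tfrac{1}{2}\log(b/a)$ for any $a,b > 0$ and $t \in \R$ (equality at $t = 0$; for general $t$, a direct manipulation shows the relevant M\"{o}bius ratio only grows in $|t|$). Substituting $a = d(z)$ and $b + it = L(w)$ and using $b \geq d(w) \geq d(z)$ produces the stated bound. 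The main subtlety is the half-space containment $\OM \subseteq H$; the rest is bookkeeping and a standard hyperbolic-distance calculation.
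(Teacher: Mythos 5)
Your proposal is correct and follows essentially the same route as the paper: choose a nearest boundary point $q$ to $z$, use convexity to produce a $\C$-linear map sending $\OM$ into a half-plane (you use the right half-plane, the paper the upper half-plane, a trivial difference), then apply the Kobayashi-distance-decreasing property together with the explicit Poincar\'e distance formula and the comparison $\distance(w,\bdy H) \geqslant \distance(w,\OM^{\cmpl})$. The only real difference is that you spell out more explicitly why the hyperplane through $q$ perpendicular to $z-q$ is supporting (the Cauchy--Schwarz argument), a step the paper leaves implicit in its ``we may choose $F$'' phrasing.
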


\begin{proof}
	Fix $z,w \in \OM$. Choose $q \in \bdy \OM$ such that $\distance(z,\OM^{\cmpl})=\|z-q\|$. By the convexity of $\OM$,
	we may choose a $\C$-linear functional $F: \C^n \to \C$ such that
	\[
	  \OM \subseteq \{ x \in \C^n \mid \iprt(F(x-q))>0 \},
	\]
	i.e., such that
	\[
	  H \defeq \{ x \in \C^n \mid \iprt(F(x-q))=0 \}
	\]
	is a supporting hyperplane for $\OM$ at $q$. In fact, we can choose $F$ such that, for every
	$x \in \C^n$, $|\iprt(F(x-q))| =
	\distance(x,H)$. Consider the $\C$-affine function $T$ on $\C^n$
	given by
	\[
	  T(x) \defeq F(x-q) \quad \forall \, x \in \C^n.
	\]
	Then, $T$ maps $\OM$ holomorphically into the upper half plane
	$\mathbb{H}$. By the Kobayashi-distance-decreasing property of $T$ and the formula for
	the Kobayashi distance in $\mathbb{H}$,
	\[
	  \koba_{\OM}(z,w) \geqslant \koba_{\mathbb{H}}(T(z),T(w)) \geqslant \frac{1}{2} \log\left(
	  \frac{\iprt(T(w))}{\iprt(T(z))} \right) = \frac{1}{2} \log\left( \frac{\iprt(F(w-q))}{\iprt(F(z-q))} \right).
	\]
	Recall that $\iprt(F(z-q))=\distance(z,H)=\distance(z,\OM^{\cmpl})$. Furthermore, $\iprt(F(w-q))=\distance(w,H)
	\geqslant \distance(w,\OM^{\cmpl})$.
	Therefore, the sequence of inequalities above gives
	\[
	  \koba_{\OM}(z,w) \geqslant \frac{1}{2} \log\left( \frac{\distance(w,\OM^{\cmpl})}{\distance(z,\OM^{\cmpl})} \right).
	\]
	Since the points $z, w\in \OM$ were arbitrarily chosen, we have the conclusion desired.	
\end{proof}

\medskip

\section{Lemmas concerning planar convex domains} \label{S:lemmas_planar_cvx_dmns}
In this section we will state and prove a number of lemmas about planar convex domains, which will be used to prove our second result. We
abbreviate $\distance(x,\Omega^{\cmpl})$ to $\delta_{\Omega}(x)$ in this section.

\begin{lemma} \label{L:inclsn_balls_cvx_sets}
Let $\Omega \subseteq \C$ be an open convex set and let $p,\zeta \in \Omega$. Let $R(p), R(\zeta) > 0$ be such that $D(p,R(p)) \subseteq
\Omega$ and $D(\zeta,R(\zeta)) \subseteq \Omega$. Then, for every $t \in [0,1]$, $\Omega$ includes the disk in $\C$ with centre $(1-t)p +
t\zeta$ and radius $(1-t) R(p) + t R(\zeta)$.
\end{lemma}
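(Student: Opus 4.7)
The plan is to show the inclusion pointwise: pick an arbitrary point $w$ of the disk $D\big((1-t)p+t\zeta,\,(1-t)R(p)+tR(\zeta)\big)$ and exhibit $w$ as a convex combination (with parameter $t$) of a point of $D(p,R(p))$ and a point of $D(\zeta,R(\zeta))$. The cases $t=0$ and $t=1$ are immediate, so I assume $t\in(0,1)$.

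Write $w=(1-t)p+t\zeta+\rho e^{i\theta}$ for some $\rho\in[0,(1-t)R(p)+tR(\zeta))$ and some $\theta\in\R$. Set
\[
\rho_1\defeq\frac{\rho\,R(p)}{(1-t)R(p)+tR(\zeta)},\qquad \rho_2\defeq\frac{\rho\,R(\zeta)}{(1-t)R(p)+tR(\zeta)}.
\]
(If $\rho=0$ take $\rho_1=\rho_2=0$.) Then by construction $(1-t)\rho_1+t\rho_2=\rho$, and the strict inequality $\rho<(1-t)R(p)+tR(\zeta)$ forces $\rho_1<R(p)$ and $\rho_2<R(\zeta)$. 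Consequently the points
\[
u\defeq p+\rho_1 e^{i\theta}\in D(p,R(p)),\qquad v\defeq \zeta+\rho_2 e^{i\theta}\in D(\zeta,R(\zeta))
\]
both lie in $\Omega$ by hypothesis, and a direct computation gives $w=(1-t)u+tv$. Convexity of $\Omega$ then yields $w\in\Omega$.

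Since $w$ was arbitrary in the open disk $D\big((1-t)p+t\zeta,\,(1-t)R(p)+tR(\zeta)\big)$, the inclusion follows. There is no real obstacle here; the only thing to notice is the convenient choice of weights $\rho_1,\rho_2$, which is essentially the observation that the disk in question is the Minkowski convex combination $(1-t)D(p,R(p))+t\,D(\zeta,R(\zeta))$ of the two original disks, so the inclusion follows from the convexity of $\Omega$ together with the fact that the Minkowski sum of two disks of radii $r_1,r_2$ centred at $z_1,z_2$ is a disk of radius $r_1+r_2$ centred at $z_1+z_2$.
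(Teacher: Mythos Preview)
Your proof is correct and follows essentially the same approach the paper indicates: the paper omits the proof as straightforward, noting that the main idea is to show the disk in question lies in the convex hull of $D(p,R(p))\cup D(\zeta,R(\zeta))$, which is precisely what your pointwise decomposition $w=(1-t)u+tv$ establishes.
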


We omit the proof because it is straightforward. The main idea behind the proof is to show that the disk described in the above lemma is
contained in the convex hull of the union of the disks $D(p, R(p))$ and $D(\zeta, R(\zeta))$. 

\begin{lemma} \label{L:mnmztn_obj_fn}
Let $\Omega \subseteq \C$ be an open convex set and let $p \in \Omega$. Suppose $\zeta \in \Omega$ is such that
\begin{equation} \label{E:where_p_is}
p \in \overline{D\big( \zeta,\dtb{\Omega}(\zeta) \big)}.
\end{equation}
Then, for every $t \in [0,1)$,
\[
p \in D \big( (1-t)p+t\zeta,(1-t)\dtb{\Omega}(p)+t\dtb{\Omega}(\zeta) \big) \subseteq \Omega.
\]
Now suppose $\dtb{\Omega}(\zeta) > \dtb{\Omega}(p)$. Let $\linVarRad(t) \defeq (1-t) \dtb{\Omega}(p) + t\dtb{\Omega}(\zeta) \; \forall \,
t \in [0,1]$, let $\alpha \defeq |\zeta-p|$ and let $\beta \defeq \dtb{\Omega}(\zeta)-\dtb{\Omega}(p)$. If we consider the mapping
\[
\objFun \defeq t \mapsto \frac{ \linVarRad(t) }{ \linVarRad(t)^2 - t^2 |\zeta - p|^2 } : [0,1) \to \R,   
\]
then

\begin{enumerate}

\item $\objFun$ is differentiable;

\item If $(\alpha^2 - \beta^2)(2 \dtb{\Omega}(p) + \beta) \leqslant \beta \dtb{\Omega}(p)^2$ then $\objFun$ is continuously extendable to
$[0,1]$ and the minimum value of $\objFun$ is \label{case_obj_fun_dcrsng_throughout_main_lemma}
\[
\frac{ \dtb{\Omega}(\zeta) }{ \dtb{\Omega}(\zeta)^2-|\zeta-p|^2 };
\]

\item If $(\alpha^2 - \beta^2)(2 \dtb{\Omega}(p) + \beta) > \beta \dtb{\Omega}(p)^2$ then $\objFun$ attains its minimum value at
\label{case_obj_fun_not_dcrsng_throughout_main_lemma}
\[
t(\zeta) \defeq \dtb{\Omega}(p) \frac{ -(\alpha^2 - \beta^2) + \alpha \sqrt{\alpha^2 - \beta^2} }{ (\alpha^2 - \beta^2)\beta }
\in (0, 1)
\]
and its minimum value is
\[
\objFun\big( t(\zeta) \big) = \frac{1}{2\dtb{\Omega}(p)} \bcdot \frac{ \beta^2 }{ \alpha \big(\alpha -  \sqrt{\alpha^2 - \beta^2}\big) }.
\]

\item Finally, the minimum value of $\objFun$ is less than $\frac{1}{\dtb{\Omega}(p)}$. \label{last_result_of_lemma}
	
\end{enumerate} 	    
\end{lemma}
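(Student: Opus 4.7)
The plan is to dispose of the preliminary inclusion using Lemma~\ref{L:inclsn_balls_cvx_sets} and then reduce items (1)--(4) to a single computation of $\objFun'$ whose sign is governed by a quadratic in $t$. For the inclusion assertion, the hypothesis $p\in\overline{D(\zeta,\dtb{\OM}(\zeta))}$ gives $|\zeta-p|\le \dtb{\OM}(\zeta)$, so for $t\in[0,1)$ the point $(1-t)p+t\zeta$ lies within distance $t|\zeta-p|\le t\dtb{\OM}(\zeta) < \linVarRad(t)$ of $p$; hence $p\in D((1-t)p+t\zeta,\linVarRad(t))$, and this disk is contained in $\OM$ by Lemma~\ref{L:inclsn_balls_cvx_sets} applied to $D(p,\dtb{\OM}(p))$ and $D(\zeta,\dtb{\OM}(\zeta))$. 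In particular $\linVarRad(t)^2-t^2|\zeta-p|^2>0$ on $[0,1)$, so $\objFun$ is well-defined and differentiable there, which settles (1).

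Write $a\defeq \dtb{\OM}(p)$, $b\defeq \dtb{\OM}(\zeta)$ (so $\beta=b-a>0$). A direct quotient-rule computation gives $\objFun'(t)=N(t)/(\linVarRad(t)^2-t^2\alpha^2)^2$ with $N(t)=\beta(\alpha^2-\beta^2)t^2+2a(\alpha^2-\beta^2)t-\beta a^2$. Then $N(0)=-\beta a^2<0$ and $N(1)=(\alpha^2-\beta^2)(2a+\beta)-\beta a^2$, so the dichotomy between cases (2) and (3) is exactly the sign of $N(1)$. In case~(2), where $N(1)\le 0$: if $\alpha^2\le\beta^2$ then both nonconstant coefficients of $N$ are nonpositive, so $N<0$ on $[0,1]$; if $\alpha^2>\beta^2$ then $N$ is a convex quadratic with $N(0),N(1)\le 0$, hence $N\le 0$ on $[0,1]$. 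In either subcase $\objFun$ is non-increasing, with minimum attained at $t=1$. A quick check rules out $\alpha=b$ in case~(2) (substituting $\alpha=b$, $\beta=b-a$ into the hypothesis reduces it to $2b^2\le 0$), so $\objFun$ extends continuously to $[0,1]$ with $\objFun(1)=b/(b^2-\alpha^2)$, matching the claimed value. In case~(3), $N(1)>0$ forces $\alpha^2>\beta^2$ (otherwise both nonconstant coefficients would be nonpositive and $N(1)\le N(0)<0$), so $N$ is a convex quadratic with $N(0)<0<N(1)$, possessing a unique positive root in $(0,1)$; the larger quadratic-formula root matches the displayed $t(\zeta)$, and $\objFun$ decreases on $[0,t(\zeta)]$ and increases on $[t(\zeta),1)$.

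The main obstacle is the closed-form evaluation of $\objFun(t(\zeta))$ in case~(3); I would sidestep brute substitution by using $N(t(\zeta))=0$ rearranged as $\linVarRad(t(\zeta))^2=a^2\alpha^2/(\alpha^2-\beta^2)$, so that $\linVarRad(t(\zeta))=a\alpha/\sqrt{\alpha^2-\beta^2}$; combining this with the factorisation $\beta^2=(\alpha-\sqrt{\alpha^2-\beta^2})(\alpha+\sqrt{\alpha^2-\beta^2})$ lets both $\linVarRad(t(\zeta))$ and $\linVarRad(t(\zeta))^2-t(\zeta)^2\alpha^2$ simplify cleanly to yield the stated $\tfrac{1}{2a}\cdot\tfrac{\beta^2}{\alpha(\alpha-\sqrt{\alpha^2-\beta^2})}$. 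Finally, (4) is immediate: $\objFun(0)=1/a$ and $\objFun'(0)=N(0)/a^4<0$, so $\objFun$ is strictly decreasing at $t=0$, whence its minimum over $[0,1)$ (or $[0,1]$, in case~(2)) is strictly less than $1/\dtb{\OM}(p)$.
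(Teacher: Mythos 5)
Your proof is correct and takes essentially the same route as the paper: compute $\objFun'$, analyze the sign of its numerator $N(t)$, and split into the two regimes according to that sign. Your presentation is slightly tidier in spots\,---\,phrasing the dichotomy via the sign of $N(1)$, using $N(t(\zeta))=0$ to simplify $\linVarRad(t(\zeta))^2$ before evaluating $\objFun(t(\zeta))$, and getting item (4) instantly from $\objFun'(0)<0$ rather than the paper's direct algebraic verification\,---\,but the underlying argument is the same.
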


\begin{proof}
In order to prove that
\[
p \in D \big( (1-t)p+t\zeta,(1-t)\dtb{\Omega}(p)+t\dtb{\Omega}(\zeta) \big) \subseteq \Omega,
\]
we first have to prove that $|p-\big( (1-t)p+t\zeta \big)| = t|\zeta - p| < (1-t)\dtb{\Omega}(p) + t\dtb{\Omega}(\zeta)$. But by the
condition \eqref{E:where_p_is}, 
\begin{equation} \label{E:where_p_is_again}
t|\zeta - p| \leqslant t\dtb{\Omega}(\zeta) < (1-t)\dtb{\Omega}(p) + t\dtb{\Omega}(\zeta) \quad \forall \, t < 1.
\end{equation} 
The inclusion statement follows from Lemma~\ref{L:inclsn_balls_cvx_sets}.

\smallskip 

Turning to $\objFun$, it is clear from \eqref{E:where_p_is_again} that it is well-defined and differentiable on $[0,1)$.

\smallskip 

Suppose first that $(\alpha^2 - \beta^2)(2 \dtb{\Omega}(p) + \beta) \leqslant \beta \dtb{\Omega}(p)^2$. Then note that necessarily
$|\zeta-p| < \dtb{\Omega}(\zeta)$. To see this, suppose $|\zeta-p|=\dtb{\Omega}(\zeta)$. Then 
\begin{align*}
(\alpha^2 - \beta^2) (2 \dtb{\Omega}(p) + \beta) &= \big( \dtb{\Omega}(\zeta)^2 - (\dtb{\Omega}(\zeta)-\dtb{\Omega}(p))^2 \big) (
\dtb{\Omega}(\zeta) + \dtb{\Omega}(p) ) \\
                                           &= 2 \dtb{\Omega}(\zeta)^2 \dtb{\Omega}(p) + \dtb{\Omega}(\zeta) \dtb{\Omega}(p)^2 -
                                           \dtb{\Omega}(p)^3 \\
                                           &> \beta \dtb{\Omega}(p)^2,  
\end{align*}
which is a contradiction. So $|\zeta-p| < \dtb{\Omega}(\zeta)$ and this shows that the expression for $\objFun(t)$ makes sense for $t \in
[0,1]$. Moreover, $\objFun$ is
differentiable on $[0,1]$ in this case. A calculation shows that
\[
\objFun'(t) = \frac{ -\beta \dtb{\Omega}(p)^2 + (\alpha^2 - \beta^2)(2 \dtb{\Omega}(p) t + \beta t^2) } { \big( \linVarRad(t)^2 - t^2
\alpha^2 \big)^2 }.
\]
If $\alpha \leqslant \beta$ then clearly $\objFun'(t) < 0$ for all $t \in [0,1]$. And if $\alpha > \beta$, then, for all $t \in [0,1)$,
\[
(\alpha^2 - \beta^2)(2 \dtb{\Omega}(p) t + \beta t^2) < (\alpha^2 - \beta^2)(2 \dtb{\Omega}(p) + \beta) \leqslant \beta
\dtb{\Omega}(p)^2, 
\]
whence
\begin{equation} \label{E:strict_ngtvty_drvtv_obj_fun_frst_case}
\objFun'(t) < 0 \quad \forall \, t \in [0,1).
\end{equation}  
So, in the case under consideration, one invariably has that $\objFun$ attains its minimum value at $1$, and the minimum value is
\[
\frac{ \dtb{\Omega}(\zeta) }{ \dtb{\Omega}(\zeta)^2 - |\zeta-p|^2 }.
\]
Furthermore, by \eqref{E:strict_ngtvty_drvtv_obj_fun_frst_case}, the minimum value above is less than $\frac{1}{\dtb{\Omega}(p)}$. 

\smallskip 

Suppose now that $(\alpha^2 - \beta^2)(2 \dtb{\Omega}(p) + \beta) > \beta \dtb{\Omega}(p)^2$. Then one necessarily has $\alpha > \beta$.
Furthermore, one expects a critical point of $\objFun$ in $(0, 1)$. In this case, by calculating the critical points of $\objFun$, we
obtain that $\objFun$ attains its minimum value at 
\[
t(\zeta) \defeq \dtb{\Omega}(p) \frac{ -(\alpha^2 - \beta^2) + \alpha \sqrt{\alpha^2 - \beta^2} }{ (\alpha^2 - \beta^2)\beta },
\]
which is a point of $(0,1)$, and that the minimum value of $\objFun$ is
\[
\objFun\big( t(\zeta) \big) = \frac{1}{2\dtb{\Omega}(p)}\bcdot
\frac{\beta^2}{ \alpha^2 - \alpha \sqrt{\alpha^2 - \beta^2} }.
\]
The following calculation shows that the minimum value above is \emph{less than} $\frac{1}{\dtb{\Omega}(p)}$:
\begin{align*}
& & \frac{ \beta^2 }{ \alpha^2 - \alpha \sqrt{\alpha^2 - \beta^2} } &< 2 & & \\
&\iff & 4 \alpha^2 (\alpha^2 - \beta^2) &< (2 \alpha^2 - \beta^2)^2 & & \text{(in this case $\alpha^2-\beta^2>0$)}\\
&\iff & 0 &< \beta^4. & &
\end{align*}  
As the last inequality is true, together with what we obtained in the other case, we get \eqref{last_result_of_lemma}.
\end{proof}

Before we state our last lemma we need to make two definitions. For $\Omega$ an open \emph{bounded} convex subset of $\C$ and for $p \in
\Omega$, we let
\begin{equation}
\rsup{\Omega}(p) \defeq \sup \big( \{ \, r > 0 \mid \exists \zeta \in \Omega \text{ such that } p \in D(\zeta,r) \subseteq \Omega \, \}
\big) \label{E:def_of_rsup}
\end{equation}
and we let
\begin{equation}
\ess{\Omega}(p) \defeq \{ \, \zeta \in \Omega \mid p \in \overline{ D( \zeta, \rsup{\Omega}(p) ) } \text{ and } D( \zeta,
\rsup{\Omega}(p) ) \subseteq \Omega \}.
\end{equation}

\begin{proposition} \label{P:improvement_Grahams_bound_one-dim_case}
Let $\Omega \subseteq \C$ be an open bounded convex set and let $p \in \Omega$. For any $\zeta \in \Omega$ such that $\dtb{\Omega}(\zeta)
> \dtb{\Omega}(p)$, let $\phi_{\zeta}$ denote the same function as in Lemma \ref{L:mnmztn_obj_fn}. Then 
\begin{enumerate}
\item $\ess{\Omega}(p)$ is a non-empty compact convex subset of $\Omega$. \label{chrctrstcs_ess_omega_dot_p}
\item\label{nearest_zeta_to_p} There exists a unique point $\zeta \in \ess{\Omega}(p)$ such that
\begin{equation} \label{E:zeta_realizes_minimum_dist}
|\zeta-p| = \distance(p,\ess{\Omega}(p)).
\end{equation}
\end{enumerate}
In the next two statements, $\zeta$ is the point in $\ess{\Omega}(p)$ introduced in \eqref{nearest_zeta_to_p}.
\begin{enumerate}[resume]
\item Suppose $\big( |\zeta-p|^2 - ( \rsup{\Omega}(p) - \dtb{\Omega}(p) )^2 \big) \big( \rsup{\Omega}(p) + \dtb{\Omega}(p) \big)
\leqslant ( \rsup{\Omega}(p)-\dtb{\Omega}(p) ) \dtb{\Omega}(p)^2$. Then \label{case_obj_fun_dcrsng_throughout}
\begin{equation} \label{E:res_case_obj_fun_dcrsng_throughout}
\dkoba_{\Omega}(p,1) \leqslant \frac{ \rsup{\Omega}(p) }{ \rsup{\Omega}(p)^2 - |\zeta-p|^2 } \leqslant \frac{1}{\dtb{\Omega}(p)},
\end{equation}
where the latter is an equality if and only if $\zeta = p$.
\item Suppose $\big( |\zeta-p|^2 - ( \rsup{\Omega}(p) - \dtb{\Omega}(p) )^2 \big) \big( \rsup{\Omega}(p) + \dtb{\Omega}(p) \big) > (
\rsup{\Omega}(p)-\dtb{\Omega}(p) ) \dtb{\Omega}(p)^2$. \label{case_obj_fun_not_dcrsng_throughout} Then, with $\alpha$ and $\beta$
denoting the same quantities as in  Lemma~\ref{L:mnmztn_obj_fn},
\begin{equation} \label{E:res_case_obj_fun_not_dcrsng_throughout}
  \dkoba_{\Omega}(p,1) \leqslant \frac{1}{2\dtb{\Omega}(p)}\bcdot
  \frac{\beta^2}{\alpha \big(\alpha - \sqrt{\alpha^2 - \beta^2}\big)} < \frac{1}{\dtb{\Omega}(p)}.
\end{equation} 
\end{enumerate} 
\end{proposition}

\begin{proof}
First we prove that $\ess{\Omega}(p)$ is non-empty. Choose an increasing sequence $(r_{\nu})_{\nu \geqslant 1}$ from the set occurring
in \eqref{E:def_of_rsup} such that $r_{\nu} > \rsup{\Omega}(p) - \tfrac{1}{\nu}$. For each $\nu$, there is a $\zeta_{\nu} \in \Omega$
such that
\[
p \in D(\zeta_{\nu},r_{\nu}) \subseteq \Omega.
\]
By the boundedness of $\Omega$, there exists a $w \in \overline{ \Omega }$ such that (without loss of generality) $\zeta_{\nu} \to w$.
Since, for each $\nu$, $|p-\zeta_{\nu}| < r_{\nu}$, therefore, by taking the limit, $|p-w| \leqslant \rsup{\Omega}(p)$, i.e., $p \in
\overline{ D(w, \rsup{\Omega}(p) ) }$. Now suppose $x \in D(w,\rsup{\Omega}(p))$. Let $\epsilon \defeq  (\rsup{\Omega}(p)-|x-w|)/2 $.
Choose $\nu' \in \posint$ such that $\rsup{\Omega}(p)-r_{\nu'}<\epsilon$. Then choose $\nu \geqslant \nu'$ such that $|w-\zeta_{\nu}| <
\epsilon$. Then,
\[
 |x-\zeta_{\nu}| \leqslant |x-w| + |w-\zeta_{\nu}| < |x-w| + \epsilon = \rsup{\Omega}(p) - \epsilon < r_{\nu'} \leqslant r_{\nu}.
\]
So, $x\in D(\zeta_{\nu}, r_{\nu})$, whence $x\in \Omega$. Therefore $D(w,\rsup{\Omega}(p)) \subseteq \Omega$. So $w \in
\ess{\Omega}(p)$, whence $\ess{\Omega}(p)$ is non-empty. Hence it makes sense to talk of points of $\ess{\Omega}(p)$ at least distance
from $p$. Also, note that if $w \in \ess{\Omega}(p)$ then $\dtb{\Omega}(w)=\rsup{\Omega}(p)$ (since $\dtb{\Omega}(w) \geqslant
\rsup{\Omega}(p)$, and if strict inequality held then the maximality of $\rsup{\Omega}(p)$ would be contradicted).

\smallskip

Now we prove the compactness of $\ess{\Omega}(p)$. Since $\ess{\Omega}(p)$ is a bounded subset of $\C$, 
it suffices to prove that it is a closed subset of $\C$. So suppose $(\zeta_{\nu})_{\nu \geqslant 1}$ is a sequence of points of
$\ess{\Omega}(p)$ converging to $w \in \C$. 
That $p \in \overline{D(w,\rsup{\Omega}(p))}$ is obvious.
Now we show that $D(w,\rsup{\Omega}(p)) \subseteq \Omega$.
Suppose $x \in D(w,\rsup{\Omega}(p))$. Let $\epsilon \defeq \rsup{\Omega}(p)-|x-w|$. Choose $\nu \in \posint$ such that $|w-\zeta_{\nu}|
< \epsilon$. Then
\[
 |x-\zeta_{\nu}| \leqslant |x-w| + |w-\zeta_{\nu}| < \rsup{\Omega}(p).
\] 
So $x \in D(\zeta_{\nu},\rsup{\Omega}(p))$, whence $x \in \Omega$. As this is true for any $x \in D(w,\rsup{\Omega}(p))$, the latter is
a subset of $\Omega$. So $w \in \ess{\Omega}(p)$ and this argument shows that $\ess{\Omega}(p)$ is a compact subset of $\Omega$.

\smallskip 

Finally we prove that $\ess{\Omega}(p)$ is convex. To this end, suppose $\zeta_1,\zeta_2 \in \ess{\Omega}(p)$ and that $t \in [0,1]$. We
want to prove that $(1-t) \zeta_1 + t \zeta_2 \in \ess{\Omega}(p)$. In order to do this we have to prove that $D\big( (1-t) \zeta_1 + t
\zeta_2,\rsup{\Omega}(p) \big) \subseteq \Omega$ and that $p \in \overline{D\big( (1-t) \zeta_1 + t \zeta_2,\rsup{\Omega}(p) \big)}$.
The first inclusion follows from Lemma \ref{L:inclsn_balls_cvx_sets}. As for the second containment,
\begin{align*}
|p - ((1-t)\zeta_1+t\zeta_2)| &= |((1-t)p+tp)-((1-t)\zeta_1+t\zeta_2)| \\
&\leqslant (1-t)|p-\zeta_1| + t |p-\zeta_2| \\
&\leqslant (1-t)\rsup{\Omega}(p) + t\rsup{\Omega}(p) = \rsup{\Omega}(p).  
\end{align*}
This shows that $(1-t) \zeta_1 + t \zeta_2 \in \ess{\Omega}(p)$, which proves that $\ess{\Omega}(p)$ is convex.
This proves \eqref{chrctrstcs_ess_omega_dot_p}.

\smallskip

We equip $\C$ with the standard Hilbert space structure from which the Euclidean norm arises. Note that the expressions in
\eqref{E:zeta_realizes_minimum_dist} are derived from the Euclidean norm. Since $\ess{\Omega}(p)$ is closed and convex, it follows from
a theorem in the elementary theory of Hilbert spaces (see \cite[Theorem~4.10]{Rudin}, for instance) that there is a unique $\zeta \in
\ess{\Omega}(p)$ such that \eqref{E:zeta_realizes_minimum_dist} holds.

\smallskip

Now suppose the condition in \eqref{case_obj_fun_dcrsng_throughout} holds. We divide the discussion into two further sub-cases:

\noindent{\textbf{Sub-case\,(a)} $\rsup{\Omega}(p) = \dtb{\Omega}(p)$.}

\noindent{In this case, $p \in \ess{\Omega}(p)$ and so $\zeta$ must be $p$. Consequently, in this case,
\[
\dkoba_{\Omega}(p,1) \leqslant \frac{1}{\dtb{\Omega}(p)} = \frac{ \rsup{\Omega}(p) }{ \rsup{\Omega}(p)^2 - |\zeta-p|^2 },
\]
where we have used the estimate \eqref{E:Kob_met_est_cvx_pt} to write the first inequality.}

\noindent{\textbf{Sub-case\,(b)} $\rsup{\Omega}(p) > \dtb{\Omega}(p)$.}

\noindent{In this case, we note that, since $\dtb{\Omega}(\zeta) = \rsup{\Omega}(p)$ and therefore $\dtb{\Omega}(\zeta) >
\dtb{\Omega}(p)$, we can appeal to Lemma~\ref{L:mnmztn_obj_fn}. By that lemma we have, for an arbitrary $t \in [0,1)$, $p \in D\big(
(1-t)p+t\zeta,r_{\zeta}(t) \big)$. Now we estimate $\dkoba_{\Omega}(p,1)$. For $t \in [0,1)$ arbitrary, consider the holomorphic function
\[
f_t \defeq z \mapsto (1-t)p + t\zeta + r_{\zeta}(t)z : \unitdisk \to \Omega.
\]
That $f_t(\unitdisk) \subseteq \Omega$ follows from Lemma~\ref{L:inclsn_balls_cvx_sets} with $R(p) = \dtb{\Omega}(p)$ and $R(\zeta) =
\dtb{\Omega}(\zeta)$. For every $z \in \unitdisk$, $f'_t(z) = r_{\zeta}(t)$. Write $z_0 \defeq t(p-\zeta)/r_{\zeta}(t) \in \unitdisk$.
Then, by the metric-decreasing property of $f_t$,
\[
\dkoba_{\Omega}(p,1) = \dkoba_{\Omega} \Big( f_t(z_0), f'_t(z_0) \frac{1}{r_{\zeta}(t)} \Big) \leqslant \dkoba_{\unitdisk} \Big( z_0,
\frac{1}{r_{\zeta}(t)} \Big)
= \frac{r_{\zeta}(t)}{r_{\zeta}(t)^2-t^2|\zeta-p|^2} = \phi_{\zeta}(t). 
\]
Minimizing the right-hand side of the inequality above with respect to $t$ tells us that the minimum of the function $\phi_{\zeta}$ is
an upper bound for $\dkoba_{\Omega}(p,1)$. Now we determine the minimum of $\phi_{\zeta}$. The condition satisfied by $\zeta$ is simply
a restatement of the condition occurring in \eqref{case_obj_fun_dcrsng_throughout_main_lemma} of Lemma~\ref{L:mnmztn_obj_fn}. So by that
lemma, the minimum value of $\phi_{\zeta}$ is
\[
\frac{ \rsup{\Omega}(p) }{ \rsup{\Omega}(p)^2 - |\zeta-p|^2 } < \frac{1}{\dtb{\Omega}(p)}
\]
and hence
\[
\dkoba_{\Omega}(p,1) \leqslant \min_{t \in [0,1]} \phi_{\zeta}(t) = \frac{ \rsup{\Omega}(p) }{ \rsup{\Omega}(p)^2 - |\zeta-p|^2 } <
\frac{1}{\dtb{\Omega}(p)}.
\]}

\noindent{Hence, the inequalities in \eqref{E:res_case_obj_fun_dcrsng_throughout} hold in either sub-case.}

\smallskip

Obviously, if $\zeta=p$ then the second inequality is an equality (because $\rsup{\Omega}(p)=\dtb{\Omega}(p)$). Suppose, conversely,
that the second inequality is an equality, and suppose, to get a contradiction, that $\zeta \neq p$. Then it must be that
$\rsup{\Omega}(p) > \dtb{\Omega}(p)$. Because, if not, then $\rsup{\Omega}(p)=\dtb{\Omega}(p)$, whence we have $\zeta=p$, as argued in
sub-case (a). This is a contradiction.  
So $\delta_{\Omega}(\zeta) = \rsup{\Omega}(p) > \dtb{\Omega}(p)$ and therefore we can consider $\phi_{\zeta}$ and appeal to
Lemma~\ref{L:mnmztn_obj_fn} to get that the minimum value of $\phi_{\zeta}$ is 
\[
\frac{ \rsup{\Omega}(p) }{ \rsup{\Omega}(p)^2 - |\zeta-p|^2 } < \frac{1}{\dtb{\Omega}(p)}.
\]
But that is a contradiction to the hypothesis, and this completes the proof of \eqref{case_obj_fun_dcrsng_throughout}.

\smallskip

Now suppose that the condition in \eqref{case_obj_fun_not_dcrsng_throughout} holds. In this case
$\rsup{\Omega}(p) > \dtb{\Omega}(p)$. The reasoning is similar to what occurs above. Therefore we can again consider
$\phi_{\zeta}$, appeal to Lemma~\ref{L:mnmztn_obj_fn}, and, reasoning as in the previous case, get that the minimum of
$\phi_{\zeta}$ is an upper bound for $\dkoba_{\Omega}(p,1)$. But in this case, since the condition satisfied by $\zeta$
is a restatement of the condition occurring in \eqref{case_obj_fun_not_dcrsng_throughout_main_lemma} of
Lemma~\ref{L:mnmztn_obj_fn}, the minimum is precisely the expression on the right-hand side of
\eqref{E:res_case_obj_fun_not_dcrsng_throughout}. That the latter is strictly smaller than $1/\dtb{\Omega}(p)$
follows from part~\eqref{last_result_of_lemma} of Lemma~\ref{L:mnmztn_obj_fn}.
\end{proof}

\medskip

\section{Proof of Theorem~\ref{T:main_theorem}} \label{S:proof_main_thm}
Before we proceed with our proof, we point out that its basic idea is inspired by the proof
of Bernal-Gonz{\'a}lez \cite{BG}, but with one significant departure. This
departure 
is the use of a refined estimate for
$\koba_{D_1}$ as discussed in Section~\ref{S:prel_lemmas}.
  
\begin{proof}
	Let
	\[
	D_1(r) \defeq \{ z \in D_1 \mid \distance(z,D_1^{\cmpl}) \geqslant r \}
	\]
	for every $r>0$ sufficiently small. Note that if $D_2=\C^m$, then the conclusion of the theorem is trivially true.
	Therefore, we suppose that $D_2 \varsubsetneq \C^m$. Then, by Lemma~\ref{L:kob_dist_lb_conv},
	\[
	  \koba_{D_2}(b,w) \geqslant \frac{1}{2} \log\left( \frac{\distance(b,D^{\cmpl}_2)}{\distance(w,D^{\cmpl}_2)} \right)
	  \quad \forall \, w \in D_2.
	\]
	Let $\phi$ be as in the statement of Theorem~\ref{T:main_theorem}. Then, for every $z \in D_1(r)$,
	\begin{equation} \label{E:lb_koba_dist_D2}
	  \koba_{D_2}(b,\phi(z)) \geqslant \frac{1}{2} \log\left(
	  \frac{\distance(b,D^{\cmpl}_2)}{\distance(\phi(z),D^{\cmpl}_2)} \right).
	\end{equation}
	Let us now suppose that $\bdy D_1$ has \emph{lower than} $\smoo^2$ regularity. In that case we have
	\begin{equation} \label{E:kob_dist_ineq_D2D1}
	  \koba_{D_2}(b,\phi(z)) \leqslant \koba_{D_1}(a,z) \leqslant C(a) + \frac{\alpha}{2}
	  \log\left( \frac{1}{\distance(z,D^{\cmpl}_1)} \right).
	\end{equation}
	The second inequality follows from Lemma~\ref{L:mercer_main_koba_dist_est}.  
	Therefore, by \eqref{E:lb_koba_dist_D2}, the above inequality, and the fact that $z \in D_1(r)$,
	\[
	  \frac{1}{2} \log\left( \frac{\distance(b,D^{\cmpl}_2)}{\distance(\phi(z),D^{\cmpl}_2)} \right)
	  \leqslant C(a)+\frac{\alpha}{2}
	  \log\left( \frac{1}{r} \right). 
	\]
	After exponentiating and a couple of computations, we get
	\begin{equation} \label{E:BG_proof_final_est}
	  \distance(\phi(z),D^{\cmpl}_2) \geqslant C \distance(b,D^{\cmpl}_2) r^{\alpha}, 
	\end{equation}
	where $C = e^{-2C(a)}$. Since $z \in D_1(r)$ was arbitrary, the above inequality completes the proof
	under the assumption that $\bdy D_1$ has
	lower than $\smoo^2$ regularity. In this case, $\alpha$, as obtained by our argument, is greater than $1$.
	
	\smallskip
	
	If $\bdy D_1$ is $\smoo^2$-smooth, then, by Lemma~\ref{L:smooth_main_koba_dist_est}, we may take $\alpha=1$
	in \eqref{E:kob_dist_ineq_D2D1}. Every subsequent step of the argument goes through, and we arrive at the
	conclusion of Theorem~\ref{T:main_theorem} with $\alpha=1$. This completes the proof. 			
\end{proof}

\medskip

\section{Proof of Theorem~\ref{T:improvement_Grahams_bound}} \label{S:proof_improvement_Grahams_bound}
\begin{proof}
For convenience we first define
\begin{equation} \label{E:basic_affine_embedding}
g \defeq z \mapsto p + (z/\|\xi\|)\xi : D(\xi) \to D.
\end{equation}
We note that
\begin{equation} \label{E:rltn_btwn_two_esses}
S^{\bullet}(p,\xi) = \Big\{ p + (z/\|\xi\|)\xi \mid z \in S(p,\xi) \Big\},
\end{equation}
where
\[
S(p,\xi) \defeq \{ z \in D(\xi) \mid 0 \in \overline{D(z,r^{\bullet}(p,\xi))} \text{ and } D(z,r^{\bullet}(p,\xi)) \subseteq D(\xi) \}.
\]
The definition of $r^{\bullet}(p,\xi)$ tells us that, in the language of Proposition~\ref{P:improvement_Grahams_bound_one-dim_case},
$r^{\bullet}(p,\xi) = \rsup{D(\xi)}(0)$. We see from the above that $S(p,\xi)$ is nothing but $\ess{D(\xi)}(0)$. Therefore, by
Proposition~\ref{P:improvement_Grahams_bound_one-dim_case}, $S(p,\xi)$ is a non-empty compact convex subset of $D(\xi)$ and there is a
unique point $z_0$ of $S(p,\xi)$ such that
\begin{equation}
|z_0| = \distance(0,S(p,\xi)).
\end{equation}
Then \eqref{E:rltn_btwn_two_esses} implies that $S^{\bullet}(p,\xi)$ is also a non-empty compact convex subset of $D \cap (p + \C \,
\xi)$. Let $q(\xi) \defeq p + z_0 \tfrac{\xi}{\|\xi\|}$. Then of course $q(\xi) \in S^{\bullet}(p,\xi)$ and
\[
\|q(\xi) - p\| = |z_0| = \distance(0,S(p,\xi)) = \distance(p,S^{\bullet}(p,\xi)).
\]
The last equality holds because $g$ preserves Euclidean distances. The uniqueness of $q(\xi)$ is also clear from the corresponding
uniqueness of $z_0$. We note that 
\[
\|q(\xi)-p\| = |z_0| \defines \alpha,
\]
that 
\[
r\big( p + (z/\|\xi\|)\xi, \xi \big) = \dtb{D(\xi)}(z) \quad \forall \, z \in D(\xi)
\]
in the notation of Proposition~\ref{P:improvement_Grahams_bound_one-dim_case}, and that
\[ 
\beta = r^{\bullet}(p,\xi) - r(p,\xi) = \rsup{D(\xi)}(0) - \dtb{D(\xi)}(0) = \dtb{D(\xi)}(z_0) - \dtb{D(\xi)}(0),
\]
where the $\alpha$ and $\beta$ above \emph{also} equal the quantities denoted by the same symbols in
Proposition~\ref{P:improvement_Grahams_bound_one-dim_case} with $\Omega \defeq D(\xi)$, $p \defeq 0$ and $\zeta \defeq
z_0$. (We note that the third equality above comes from the last sentence of the first paragraph of the proof of
Proposition~\ref{P:improvement_Grahams_bound_one-dim_case}.)

\smallskip

Now suppose that the condition in \eqref{C:mult-dim_thm_case_obj_fun_dcrsng_thrght} of Theorem~\ref{T:improvement_Grahams_bound} holds.
By the above observations, we can invoke Part~\eqref{case_obj_fun_dcrsng_throughout} of
Proposition~\ref{P:improvement_Grahams_bound_one-dim_case} to get 
\[
\dkoba_{D(\xi)}(0,1) \leqslant \frac{ \rsup{D(\xi)}(0) }{ \rsup{D(\xi)}(0)^2 - |z_0|^2 } = \frac{ r^{\bullet}(p,\xi) }{
r^{\bullet}(p,\xi)^2 - \| q(\xi)-p \|^2 }.
\]
By the metric-decreasing property of holomorphic mappings,
\[
\dkoba_D\Big( p,\frac{\xi}{\|\xi\|} \Big) = \dkoba_D( g(0), g'(0)\,1 ) \leqslant \dkoba_{D(\xi)}(0,1) \leqslant \frac{
r^{\bullet}(p,\xi) }{ r^{\bullet}(p,\xi)^2 - \| q(\xi)-p \|^2 }. 
\]
Now, using the homogeneity of $\dkoba_D(p,\bcdot)$, we get \eqref{E:mult-dim_thm_cnclsn_case_obj_fun_dcrsng_thrght}. We also note that,
by Proposition~\ref{P:improvement_Grahams_bound_one-dim_case},
\[
\frac{ \rsup{D(\xi)}(0) }{ \rsup{D(\xi)}(0)^2 - |z_0|^2 } < \frac{1}{\dtb{D(\xi)}(0)},
\]
which translates to
\[
\frac{ r^{\bullet}(p,\xi) }{ r^{\bullet}(p,\xi)^2 - \| q(\xi)-p \|^2 } < \frac{1}{r(p,\xi)}
\]
if $z_0 \neq 0$, i.e., the upper bound obtained is strictly smaller than $\|\xi\|/r(p,\xi)$ if $q(\xi) \neq p$.

\smallskip

Now suppose that the condition in \eqref{C:mult-dim_thm_case_obj_fun_not_dcrsng_thrght} of Theorem~\ref{T:improvement_Grahams_bound}
holds. This time
we can invoke Part~\eqref{case_obj_fun_not_dcrsng_throughout} of
Proposition~\ref{P:improvement_Grahams_bound_one-dim_case} and the inequality $\dkoba_D(p,\xi/\|\xi\|) \leqslant
\dkoba_{D(\xi)}(0,1)$ to get\,---\,by our observation above about the quantities $\alpha$ and
$\beta$\,---\,the bound \eqref{E:mult-dim_thm_cnclsn_case_obj_fun_not_dcrsng_thrght}. In this case the bound obtained is in fact
strictly smaller that $\|\xi\|/r(p,\xi)$. This completes the proof of the theorem.
\end{proof}
\smallskip

\section*{Acknowledgements}
I am grateful to my thesis adviser, Gautam Bharali, for suggesting some of the ideas in this work. I would also like to
thank him for his help with the writing of this paper. Furthermore, I am indebted to the anonymous referee of
an earlier version of this paper for suggesting a way to simplify one of the key steps in the proof of
Theorem~\ref{T:main_theorem}. Finally, I would like to thank Prof.~Harold Boas for his suggestions for
the writing of the introduction of this work.
\medskip

\end{document}